\def\O{\Omega}
\def\g{\gamma}
\def\l{\lambda}
\def\VK{V^{\E}}
\renewcommand\sp{\mathop{\mathrm{Sp}}\nolimits}
\newcommand\bu{\boldsymbol{u}}
\def\Vh{V_h}
\def\PiK{\Pi^{\E}}
\def\CM{\mathcal{X}}
\def\CN{\mathcal{Y}}
\renewcommand\sp{\mathop{\mathrm{sp}}\nolimits}
\def\CT{\mathcal{T}}
\def\CM{\mathcal{X}}
\def\CN{\mathcal{Y}}
\def\O{\Omega}
\def\P{\mathbb{P}}
\def\PiK{\Pi^{\nabla,E}}
\def\Pio{\Pi^{E}}
\def\Vh{V_h}
\def\VK{V^{E}_h}
\def\WK{\widetilde{V}_h^E}
\def\l{\lambda}
\def\g{\gamma}
\renewcommand\div{\mathop{\mathrm{div}}\nolimits}
\renewcommand\div{\mathop{\mathrm{div}}\nolimits}
\def\CT{{\mathcal T}}
\newcommand\R{\mathbb{R}}
\renewcommand\H{\mathrm{H}}
\renewcommand\L{\mathrm{L}}
\renewcommand\O{\Omega}
\renewcommand\div{\mathop{\mathrm{div}}\nolimits}
\renewcommand\sp{\mathop{\mathrm{sp}}\nolimits}
\newcommand{\vertiii}[1]{{\left\vert\kern-0.25ex\left\vert\kern-0.25ex\left\vert #1 
    \right\vert\kern-0.25ex\right\vert\kern-0.25ex\right\vert}}
\crefname{hypothesis}{Hypothesis}{Hypotheses}
\title{VEM allowing small edges for the acoustic problem\thanks{Submitted to the editors DATE.
\funding{DA and FL were partially supported by DIUBB through project 2120173 GI/C Universidad del B\'io-B\'io. FL was partially supported by 
ANID-Chile through FONDECYT project 11200529 (Chile). GR was supported by Universidad de Los Lagos Regular R02/21 and ANID-Chile through FONDECYT project 1231619 (Chile).
}}}
\author{Danilo Amigo\thanks{GIMNAP-Departamento de Matem\'atica, Universidad del B\'io-B\'io, Casilla 5-C, Concepci\'on, Chile. 
\texttt{danilo.amigo2101@alumnos.ubiobio.cl}.}
\and
Felipe Lepe\thanks{GIMNAP-Departamento de Matem\'atica, Universidad del B\'io-B\'io, Casilla 5-C, Concepci\'on, Chile. 
\texttt{flepe@ubiobio.cl}.}
\and
Gonzalo Rivera\thanks{Departamento de Ciencias Exactas, Universidad de Los Lagos, Osorno, Chile.
\texttt{gonzalo.rivera@ulagos.cl}}}
\begin{document}
\maketitle
\nolinenumbers

\begin{abstract}
In this paper we propose and analyze a virtual element method to approximate the natural frequencies of the acoustic eigenvalue problem with
polygonal meshes that allow the presence of small edges. With the aid of a suitable seminorm that depends on the stabilization of the small edges method, we prove
convergence and error estimates for the eigenfrequencies and eigenfunctions of the problem, supporting our analysis on the compact operators theory. We report some numerical tests that allows us to assess the performance of the method and the accuracy on the approximation.
\end{abstract}

\begin{keywords}
virtual element methods, acoustics, a priori error estimates, polygonal meshes.

\end{keywords}

\begin{AMS}
49K20, 
49M25, 
65N12, 
65N15,  
65N25. 
\end{AMS}

\section{Introduction}
\label{sec:introduccion}
Numerical methods for the acoustic problem has been a matter of study from several years due the importance of the knowledge of the natural frequencies of
fluids with acoustic properties. The acoustic phenomenon is well established from the physical point of view, where in \cite{Soize1997StructuralAA} it is possible to 
find a complete description of the meaning of the physical interpretation of the acoustic system when we are in presence of fluids that may produce for instance,  internal dissipation, or inviscid fluids. When dissipative fluids are considered, naturally, the eigenvalue problem that emerge is non-linear and the analysis for this type of problem is not direct. We refer to \cite{MR1770352,MR3854050} where it is possible to find the functional treatment of such problems. On the other hand, fluids that not have presence of dissipative properties lead to linear eigenvalue problems, where the literature related to the numerical approximation of the solution of the acoustic system is abundant, not only on what concerns to  numerical methods, but also on the different formulations of the systems of partial differential equations.

Regarding to the virtual element method (VEM), the applications to solve eigenvalue problems are well documented on the literature for different
nature of partial differential equations and the spectral problems associated to them. We refer, for instance, to \cite{gardini2, MR3867390, GMV2018, MR4229296,MR4253143,MR3340705, MR4050542, MVsiam2021} where the VEM have shown, on its conforming and non-conforming versions,  the accuracy in the approximation of the solutions of eigenvalue problems related to second and fourth order eigenvalue problems, elasticity and Stokes eigenvalue problems, mixed formulations, etc. In particular, we refer to  \cite{BMRR,MR4550402} where the displacement formulation of the acoustic eigenvalue problem has been considered, involving VEM spaces to discretize the space $\H(\div)$. All these references and the references therein, operate under the classic assumptions of \cite{BBCMMR2013} for the polygonal meshes which consists in star-shaped polygons and the sides of these polygons are not allowed to be too small. This last condition has been relaxed on 
\cite{BLR2017} for the two dimensional Laplace source problem and extended in \cite{MR3815658} for polyhedral allowing small faces leading to an important advance on the study of VEM but, for the best of our knowledge, is only available for second order elliptic problems involving the discretization of $\H^1$ spaces. In this sense, the new approach of small edges have emerged as an excellent tool to approximate the solutions of eigenvalue problems as is presented in \cite{danilo_eigen, MR4284360, LR3}. This is precisely what motivates our work in order to continue with our research program on the applications of the VEM allowing for small edges on spectral problems. In the present case we focus on the acoustic eigenvalue problem in its pressure formulation. Despite to the fact that in \cite{MR4284360} the Laplace operator has been already studied with the VEM allowing small edges as in our present case, we have to precise some difference: in one hand, we need to stabilize every bilinear form for the acoustic problem which for the Steklov problem is no needed. On the other hand, since the left-hand side of our discrete problem is not stable, it is not possible to relate the $\H^1$ norm and the seminorm introduced in  \cite{BLR2017}, implying  that the convergence analysis in $\H^1$ norm for the solution operators must be analyzed with different techniques compared with \cite{MR4284360}, where now, the discrete coercivity is no longer needed for the analysis.  Let us remark that the choice of the pressure formulation for the acoustic problem  is precisely since the variational formulation demands to seek the pressure on the space $\H^1$ and according to the classic regularity for the pure-Neumann Laplace problem,  the small edges approach can be used according to \cite{BLR2017}. Moreover, since the elasticity equations on its source and spectral problems have been already studied with the VEM allowing small edges (see \cite{MR4581469,danilo_eigen}), the  analysis of the acoustic problem with a VEM allowing for small edges  opens the gate to analyze numerically a more challenging problem as the elastoacustic problem which we can describe as a formulation depending on the displacement of the solid and the pressure of the fluid (see \cite{MR1993937} for instance). 

The outline of our papers is as follows: In Section \ref{sec:model} we present the problem under consideration. This includes the bilinear forms, functional spaces, regularity of the eigenfunctions, the solution  operator and the corresponding  spectral characterization. Section \ref{sec:virtual} states the virtual element method, where the definitions and assumptions on the mesh are presented. With the aim of develop a small edges method, we introduce a suitable norm depending on the stabilization term, which is taking in an  appropriate way for the small edges scheme. With the discrete bilinear forms we introduce the discrete eigenvalue problem and the discrete solution operator in order to perform the analysis of well posedness, spectral convergence and the derivation of error estimates. We end the paper in Section \ref{sec:numerics} reporting a series of numerical tests to assess the performance of the method, in order to confirm the theoretical results.


 \section{The  model problem}
 \label{sec:model}
\label{sec:pressure}
Let $\O$ be an open and bounded bidimensional domain with Lipschitz boundary $\partial\O$. The classic acoustic problem is:  Find $\omega\in\mathbb{R}$, the displacement $\bu$ and the pressure $p$ on a domain $\Omega\subset\mathbb{R}^{\texttt{d}}$, such that 
\begin{equation}\label{def:acustica}
\left\{
\begin{array}{rcll}
\nabla p-\omega^2\rho \bu & = & \boldsymbol{0}&\text{in}\,\O\\
p+\rho c^2\div\bu & = & 0 &\text{in}\,\O\\
\bu\cdot\boldsymbol{n}&=&0&\text{on}\,\partial\O,
\end{array}
\right.
\end{equation}
where $\rho$ is the density, $c$ is the sound speed, and $\boldsymbol{n}$ is the outward unitary vector. 
Now, using the second equation of \eqref{def:acustica} we can eliminate the displacement in order to obtain a problem depending only on the pressure. This problem consists into find the pressure $p$ and the frequency $\omega$ such that 
\begin{equation}\label{def:acustica_pressure}
\left\{
\begin{aligned}
 c^2\div \left(\frac{1}{\rho}\nabla p\right)+\frac{\omega^2}{\rho} p& = &0&\text{ in }\,\O,\\
\nabla p\cdot\boldsymbol{n}&=&0&\text{ on }\,\partial\O,
\end{aligned}
\right.
\end{equation}

 A variational formulation for \eqref{def:acustica_pressure} is: Find $\omega\in\mathbb{R}$ and $0\neq p\in \H^1(\O)$ such that
\begin{equation*}
\displaystyle c^2\int_{\O}\frac{1}{\rho}\nabla p\cdot\nabla v=\omega^2\int_{\O}\frac{1}{\rho}pv \quad\forall v\in \H^1(\O).
\end{equation*}
Let us define the bilinear forms $a:\H^1(\O)\times \H^1(\O)\rightarrow\mathbb{R}$ and $b:\H^1(\O)\times \H^1(\O)\rightarrow\mathbb{R}$, which are given by
\begin{equation*}
a(q,v):=c^2\int_{\O}\frac{1}{\rho}\nabla q\cdot\nabla v\quad \text{and}\quad b(q,v):=\int_{\O}\frac{1}{\rho}qv\quad\forall q,v \in\H^1(\O).
\end{equation*}
With a shift argument and setting $\lambda := \omega^{2}+1$, we arrive to the following problem: Find $\lambda\in\mathbb{R}$ and $0\neq p\in \H^1(\O)$ such that
\begin{equation}
\label{eq:pression}
\widehat{a}(p,v)=\lambda b(p,v)\quad\forall v\in \H^1(\O),
\end{equation}
where the bilinear form $\widehat{a}:\H^1(\O)\times \H^1(\O)\rightarrow\mathbb{R}$ is defined for all $q,v\in \H^1(\O)$ by 
$$ \widehat{a}(q,v) := a(q,v) + b(q,v).$$

It is easy to check that $\widehat{a}(\cdot,\cdot)$ is coercive in $\H^1(\O)$. This allows us to introduce the solution operator $T: \H^1(\O)\rightarrow \H^1(\O)$, defined by 
$Tf=\widetilde{p}$, where $\widetilde{p}\in \H^1(\O)$ is the solution of the corresponding  associated source problem: Find $\widetilde{p}\in\H^1(\O)$ such that
\begin{equation}
\label{eq:source_pr}
\widehat{a}(\widetilde{p},v)=b(f,v)\quad\forall v\in\H^1(\O).
\end{equation}

The regularity results that we need for our purposes are the ones related to the Laplace problem with pure null boundary conditions. This regularity is stated in the following lemma (see \cite[Lemma 2.2]{MR3340705} and \cite{MR0775683}).
\begin{lemma}
\label{lmm:regularity}
There exists $r_{\Omega}>1/2$ such that the following results hold:
\begin{enumerate}
\item For all $f\in\H^1(\O)$ and for all $r\in (1/2,r_{\O})$ the solution $\widehat{p}$ of \eqref{eq:source_pr} satisfies $\widehat{p}\in\H^{1+s}(\O)$ with $s:=\min\{r,1\}$. Moreover, there exists a constant $C>0$ such that
\begin{equation*}
\|\widetilde{p}\|_{1+s}\leq C\|f\|_{1,\O};
\end{equation*}
\item If $p$ is an eigenfunction of problem \eqref{eq:pression} with eigenvalue $\lambda$, for all $r\in(1/2,r_{\O})$ there hold that $p\in\H^{1+r}(\O)$ and also, there exists a constant $C>0$, depending on $\lambda$, such that
\begin{equation*}
\| p\|_{1+r}\leq C\|p\|_{1,\O}.
\end{equation*}
\end{enumerate} 
\end{lemma}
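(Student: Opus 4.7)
The plan is to reduce both statements to classical elliptic regularity for the pure-Neumann Laplace problem on Lipschitz domains, which is the technical ingredient provided by \cite{MR0775683} and already invoked in \cite[Lemma 2.2]{MR3340705}. The key observation is that, since $\rho$ and $c$ are (positive) constants, the strong form of \eqref{eq:source_pr} reads
\begin{equation*}
-c^{2}\Delta\widetilde{p} + \widetilde{p} \;=\; f \quad \text{in } \O,\qquad \nabla\widetilde{p}\cdot\bn = 0 \quad \text{on } \DO,
\end{equation*}
i.e.\ a Helmholtz-type Neumann problem. The lower-order term $\widetilde{p}$ is harmless for regularity purposes: it may be moved to the right-hand side and absorbed into the data after applying the coercivity of $\widehat{a}(\cdot,\cdot)$, which already yields $\|\widetilde p\|_{1,\O}\le C\|f\|_{0,\O}$.

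First I would address item 1. Since $f\in \H^{1}(\O)\subset \L^{2}(\O)$, the inhomogeneous datum and the lower-order term both belong to $\L^{2}(\O)$. Invoking the shift theorem for the Neumann Laplacian on a Lipschitz domain, there exists a constant $r_{\O}>1/2$, depending only on the geometry of $\O$, such that for every $r\in(1/2,r_{\O})$ the unique variational solution satisfies $\widetilde{p}\in \H^{1+s}(\O)$ with $s=\min\{r,1\}$, together with the estimate $\|\widetilde{p}\|_{1+s}\le C\|f\|_{0,\O}\le C\|f\|_{1,\O}$. The truncation $s\le 1$ appears because the $\L^{2}$-regularity of the data only buys us up to $\H^{2}$, even when the geometric index $r_{\O}$ exceeds one.

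For item 2, I would bootstrap from the spectral equation. An eigenpair $(\lambda,p)$ with $p\in \H^{1}(\O)$ satisfies $\widehat{a}(p,v)=\lambda\, b(p,v)$ for all $v\in\H^{1}(\O)$, i.e.\ $p=\lambda\, Tp$. Taking $f=\lambda p\in \H^{1}(\O)\subset \L^{2}(\O)$ and applying the same Neumann shift theorem, I obtain $p\in\H^{1+r}(\O)$ for any $r\in(1/2,r_{\O})$ with $\|p\|_{1+r}\le C\|\lambda p\|_{0,\O}\le C(\lambda)\|p\|_{1,\O}$, where the constant now depends on $\lambda$. The absence of the truncation $s\le 1$ in the eigenfunction statement is immaterial: only $r\in(1/2,r_{\O})$ is claimed, and in the cases of interest $r_{\O}\le 1$ anyway.

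The genuinely hard content is the existence of the regularity threshold $r_{\O}>1/2$, which is not elementary for a general Lipschitz domain; we do not reprove it but quote it from the literature. Everything else is a bookkeeping exercise: verifying that the lower-order term and the positive coefficients do not spoil the application of the shift theorem, and tracking the dependence of the constants on $f$ (item 1) or on $\lambda$ (item 2).
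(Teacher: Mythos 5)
Your argument is correct and is essentially the route the paper itself takes: the paper offers no proof of this lemma, simply delegating it to the classical shift theorem for the pure-Neumann Laplacian on Lipschitz domains as in \cite[Lemma 2.2]{MR3340705} and \cite{MR0775683}, which is exactly what you invoke after absorbing the harmless lower-order term and bootstrapping via $p=\lambda Tp$ for the eigenfunctions. The only soft spot is your parenthetical dismissal of the case $r_{\O}>1$ in item 2 (on convex domains one would iterate the shift argument rather than assume $r_{\O}\le 1$), but this does not affect the substance.
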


In virtue of Lemma \ref{lmm:regularity}, the solution operator $T$  results to be compact due the compact inclusion of $\H^{1+s}(\O)$ onto $\H^1(\O)$ and self-adjoint with respect to $\widehat{a}(\cdot,\cdot)$. We observe that $(\lambda,p) \in \mathbb{R}\times \H^1(\O)$ solves \eqref{eq:pression} if and only if $(\mu,p) \in \mathbb{R}\times \H^1(\O)$ is an eigenpair of $T$, with $\mu := 1/\lambda$. Finally, since we have the additional regularity for the eigenfunctions, the following spectral characterization of $T$ holds.
\begin{lemma}[Spectral Characterization of $T$]
The spectrum of $T$ satisfies $\sp(T) = \{0,1\} \cup \{\mu_k\}_{k\in\mathbb{N}}$, where $\{\mu_k\}_{k\in\mathbb{N}}$ is a sequence of real and positive eigenvalues that converge to zero, according to their respective multiplicities.
\end{lemma}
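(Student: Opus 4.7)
The plan is to obtain the spectral structure as a direct consequence of the Riesz--Schauder theory for a compact self-adjoint operator on a Hilbert space, and then to isolate the eigenvalue $\mu=1$ corresponding to the rigid-mode solution $\omega=0$. I would first verify that $T$ is a well-defined bounded linear operator on $\H^1(\Omega)$: since $\widehat{a}(\cdot,\cdot)$ is continuous and coercive on $\H^1(\Omega)$ (the coefficients $\rho^{-1}$ and $c$ being bounded above and below), and $f \mapsto b(f,\cdot)$ is continuous on $\H^1(\Omega)$, the Lax--Milgram lemma supplies a unique $\widetilde{p}=Tf \in \H^1(\Omega)$ for every datum $f \in \H^1(\Omega)$, together with the bound $\|Tf\|_{1,\Omega} \lesssim \|f\|_{1,\Omega}$.

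Next I would establish the two structural properties of $T$. For compactness I would invoke Lemma~\ref{lmm:regularity}(1) to lift $Tf$ into $\H^{1+s}(\Omega)$ with a continuous estimate in terms of $\|f\|_{1,\Omega}$, and then compose with the compact Rellich embedding $\H^{1+s}(\Omega) \hookrightarrow \H^1(\Omega)$, valid because $s>1/2$. Self-adjointness with respect to the $\widehat{a}$-inner product reduces to a one-line symmetry computation, namely
\[
\widehat{a}(Tf,g) \;=\; b(f,g) \;=\; b(g,f) \;=\; \widehat{a}(Tg,f) \;=\; \widehat{a}(f,Tg).
\]
Positivity of the nonzero eigenvalues is then read off by testing $Tp=\mu p$ against $p$: one obtains $\mu\,\widehat{a}(p,p) = b(p,p) = \int_\Omega \rho^{-1} p^{2} \ge 0$, and coercivity forces $\mu>0$ whenever $p\not\equiv 0$.

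At this point the spectral theorem for compact self-adjoint operators on the Hilbert space $(\H^1(\Omega),\widehat{a})$ delivers $\sp(T) = \{0\} \cup \{\mu_k\}_{k\in\mathbb{N}}$ with real positive eigenvalues of finite multiplicity accumulating only at $0$. To finish, I would observe that $\mu=1$ belongs to $\sp(T)$ and should be singled out: the equation $Tp=p$ is equivalent to $a(p,v)=0$ for every $v\in \H^1(\Omega)$, whose solutions are exactly the constants on $\Omega$, which indeed lie in $\H^1(\Omega)$; this is the analytic footprint of the physical rigid mode $\omega=0$. Relabelling the remaining eigenvalues as the sequence $\{\mu_k\}_{k\in\mathbb{N}}$ produces the announced decomposition $\sp(T) = \{0,1\} \cup \{\mu_k\}_{k\in\mathbb{N}}$. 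The only mildly delicate step is compactness, which rests entirely on the pure-Neumann regularity pickup in Lemma~\ref{lmm:regularity}; everything else is a textbook application of the compact self-adjoint spectral theorem.
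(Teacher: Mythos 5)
Your proposal is correct and follows essentially the same route as the paper: compactness of $T$ via the regularity lift of Lemma~2.1 combined with the compact embedding $\H^{1+s}(\Omega)\hookrightarrow\H^{1}(\Omega)$, self-adjointness with respect to $\widehat{a}(\cdot,\cdot)$ from the symmetry of $b$, and then the spectral theorem for compact self-adjoint operators. The paper states these ingredients in the paragraph preceding the lemma and leaves the rest implicit, so your additional identification of $\mu=1$ with the constant functions (the rigid mode $\omega=0$, since $Tp=p$ forces $a(p,p)=0$ and hence $\nabla p=0$) is a welcome detail rather than a departure.
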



\section{The virtual element method}
\label{sec:virtual}
Let us now introduce the ingredients to establish the virtual element method  for the eigenvalue problem \eqref{eq:pression}. First we recall the mesh construction and the assumptions considered in \cite{BBCMMR2013} for the virtual element
method. 
Let $\left\{\CT_h\right\}_h$ be a sequence of decompositions of $\Omega$ into polygons which we denote by $E$. Let us denote by $h_E$   the diameter of the element $E$ and $h$ the maximum of the diameters of all the elements of the mesh, i.e., $h:=\max_{E\in\Omega}h_E$.  Moreover, for simplicity, in what follows we assume that $\kappa $ and $\gamma$  are  piecewise constant with  respect to the decomposition $\mathcal{T}_h$, i.e., they are  piecewise constants for all $E\in \mathcal{T}_h$ (see for instance \cite{BLR2017}).

 For the analysis of the VEM, we will make as in \cite{BBCMMR2013} the following
assumption: there exists a positive real number $\rho$ such that, for every $ E \in\CT_{h}$ and for every $\CT_{h}$,
\begin{itemize}
\item \textbf{A1.} For all meshes
$\CT_h$, each polygon $E\in\CT_h$ is star-shaped with respect to a ball
of radius greater than or equal to $\rho h_{E}$.
\end{itemize}

For any simple polygon $ E$ we define 
\begin{align*}
	\widetilde{V}_h^E:=\{v_h\in \H^1(E):\Delta v_h \in \mathbb{P}_1(E),
	v_h|_{\partial E}\in C^0(\partial E), v_h|_{e}\in \mathbb{P}_1(e) \  \forall e \in \partial E \}.
\end{align*}

Now, in order to choose the degrees of freedom for $\widetilde{V}_h^E$ we define
\begin{itemize}
	\item $\mathcal{V}_E^h$: the value of $w_{h}$ at each vertex of $E$,
\end{itemize}
as a set of linear operators from $\widetilde{V}_h^E$ into $\R$. In \cite{AABMR13} it was established that $\mathcal{V}_E^h$ constitutes a set of degrees of freedom for the space $\widetilde{V}_h^E$.

 On the other hand, we define  the projector $\PiK:\ \WK\longrightarrow\P_1(E)\subseteq\WK$ for
each $v_{h}\in\WK$ as the solution of 
\begin{equation*}
\int_E (\nabla\PiK v_{h}-\nabla v_{h})\cdot\nabla q_{1}=0
\quad\forall q_{1}\in\P_1(E),\qquad \overline{\PiK v_{h}}=\overline{v_{h}},
\end{equation*}
where  for any sufficiently regular
function $v$, we set $\overline{v}:=\vert\partial E\vert^{-1}\int_{\partial E}v.$
We observe that the term $\PiK v_{h}$ is well defined and computable from the degrees of freedom  of $v$ given by $\mathcal{V}_E^h$, and in addition the projector $\PiK$ satisfies the identity  $\PiK(\P_{1}(E))=\P_{1}(E)$ (see for instance \cite{AABMR13}).

We are now in position  to introduce our local virtual space
\begin{equation*}\label{Vk}
\VK
:=\left\{v_{h}\in 
\WK: \displaystyle \int_E \PiK v_{h}p_{1}=\displaystyle \int_E v_{h}p_{1},\quad \forall p_{1}\in 
\mathbb{P}_1(E)\right\}.
\end{equation*}
Now, since $\VK\subset \WK$, the operator $\PiK$ is well defined on $\VK$ and computable  only on the basis of the output values of the operators in $\mathcal{V}_E^h$.
In addition, due to the particular property appearing in definition of the space $\VK$, it can be seen that $\forall p_{1} \in \mathbb{P}_1(E)$ and $\forall v_h\in \VK$ the term $(v_h,p_{1})_{0,E}$
is computable from $\PiK v_h$, and hence  the  ${\mathrm L}^2(E)$-projector operator $\Pio: \VK\to \P_1(E)$ defined  by
$$\int_E \Pi^{E}v_h p_{1}=\int_E v_h p_{1}\qquad \forall p_{1}\in \mathbb{P}_1(E),$$
depends only on the values of the degrees of freedom of $v_h$.  Actually, it is easy to check that the projectors $\PiK$ and $\Pi^{E}$ are the same operators  on the space $\VK$ (see \cite{AABMR13} for further details).

Finally, for every decomposition $\CT_h$ of $\Omega$ into simple polygons $ E$ we define the global virtual space
\begin{equation}
\label{eq:globa_space}
\Vh:=\left\{v\in \H^1(E):\ v|_{ E}\in\VK\quad\forall E\in\CT_h\right\},
\end{equation}
and the global degrees of freedom are obtained by collecting the local ones, with the nodal and interface degrees of freedom corresponding to internal entities counted only once
those on the boundary are fixed to be equal to zero in accordance with the ambient space $\H_{0}^{1}(\Omega)$.
\subsection{Discrete bilinear forms}
In order to propose the discrete counterparts of $a(\cdot,\cdot)$ and $b(\cdot,\cdot)$, we split these  forms as follows
\begin{equation*}
a(q,v) = \displaystyle{\sum_{E \in \mathcal{T}_{h}} a^{E}(q,v)}, \quad b(q,v) = \displaystyle{\sum_{E \in \mathcal{T}_{h}} b^{E}(q,v)}.
\end{equation*}

Now, for each polygon $E \in \mathcal{T}_{h}$, we introduce the following symmetric and semi-positive definite bilinear form $S^{E} : V_{h}\times V_{h} \rightarrow \mathbb{R}$ as follows:
\begin{equation*}
S^{E}(q_{h},v_{h}) := h_{E}\displaystyle{\int_{\partial E} \partial_{s}q_{h}\partial_{s}v_{h}} \quad \forall q_{h}, v_{h} \in V_{h}^{E},
\end{equation*}
where $\partial_{s}$ denotes a derivative along the edge. Then, we define the local discrete bilinear form $a_{h}^{E}(\cdot,\cdot) : V_{h}\times V_{h} \rightarrow \mathbb{R}$ by
\begin{equation*}
a_{h}^{E}(q_{h},v_{h}) := a^{E}(\PiK q_{h}, \PiK v_{h}) + S^{E}(q_{h} - \PiK q_{h}, v_{h} - \PiK v_{h}),
\end{equation*}
for all $q_{h}, v_{h} \in V_{h}^{E}$. 

Now, we introduce the following discrete semi-norm (see \cite{BLR2017} for details)
\begin{equation*}
\label{eq:triple}
\vertiii{v}_{E}^2:=a^{E}\big(\PiK v,\PiK v)+S^{E}(v-\bar{v},v-\bar{v})\qquad\forall
v\in\VK+\mathcal{V}^{E},
\end{equation*}
where  $\mathcal{V}^{E}\subseteq \H^1(E)$ is a subspace of sufficiently
regular functions for $S^{E}(\cdot,\cdot)$ to make sense.
 
For any sufficiently regular functions,
we introduce the following global semi-norms
\begin{equation*}
\vertiii{v}^2:=\sum_{E\in\CT_h}\vertiii{v}_{E}^2,\qquad
\left|v\right|_{1,h}^2
:=\sum_{E\in\CT_h}\left\|\nabla v\right\|_{0,E}^2.
\end{equation*}
According to  \cite[Lemma~3.1]{BLR2017} the existence of  positive constants $C_1,C_2, C_3$, independent of $h$ but depending on the polygon $E$, such that
\begin{align}
C_1\vertiii{v}_{E}^2\le a_h^{E}(v,v)\le C_2\vertiii{v}_{E}^2\quad\forall v\in\VK,\label{eqrefgt1}\\
a_h^{E}(v,v)\le C_3(\vertiii{v}^2+\vert v\vert_{1,E}^2)\quad\forall v\in\VK.\label{eqrefgt2}
\end{align}
In addition, it holds
\begin{align}
a^{E}(v,v)\le C_4\vertiii{v}_{E}^2\quad\forall v\in\VK,\label{eqrefgt3}\\
\vertiii{p_{1}}_{E}^2\le C_5 a^{E}(p_{1},p_{1})\quad\forall p_{1}\in\P_1(E),\label{eqrefgt4}
\end{align}
where $C_4$ and $C_5$ are positive constants independent of $h$ but depending on the polygon $E$. On the other hand, to introduce the local discrete counterpart of $b^{E}(q_{h},v_{h})$, we consider any symmetric and semi-positive definite bilinear form $S_{0}^{E} : V_{h}^{E}\times V_{h}^{E} \rightarrow \mathbb{R}$ satisfying
\begin{equation*}
b_{0}b^{E}(v_{h},v_{h}) \leq S_{0}^{E}(v_{h},v_{h}) \leq b^{1}b^{E}(v_{h},v_{h}) \quad \forall v_{h} \in V_{h}^{E},
\end{equation*}
where $b_{0},b^{1}$ are two   positive constants. Then, we define for each polygon $E$ the local (and computable) bilinear form $b_{h}^{E} : V_{h}^{E}\times V_{h}^{E} \rightarrow \mathbb{R}$ by
\begin{equation*}
b_{h}^{E}(q_{h},v_{h}) = b^{E}(\Pio q_{h}, \Pio v_{h}) + S_{0}^{E}(q_{h} - \Pio q_{h}, v_{h} - \Pio v_{h})\quad \forall q_{h}, v_{h} \in V_{h}^{E}.
\end{equation*}
We remark that the discrete bilinear form $b_{h}^{E}(\cdot,\cdot)$ satisfies the classical properties of consistency and stability. Then, the global discrete bilinear forms $a_{h}(\cdot,\cdot)$ and $b_{h}(\cdot,\cdot)$ are be expressed componentwise as follows
 \begin{equation*}
\label{eq:bilineal_form_B_split_{h}}
\begin{split}
a_{h}(q_{h},v_{h}):
= \sum_{ E\in\CT_h} a_{h}^{E}( q_{h}, v_{h}), \quad
b_{h}(q_h,v_h) := \sum _{E\in\CT_h} b_{h}^{E}(q_h,v_h), \\
\widehat{a}_{h}(q_{h},v_{h}) := \sum_{ E\in\CT_h} a_{h}^{E}( q_{h}, v_{h}) + b_{h}^{E}(q_h,v_h).
\end{split}
\end{equation*}
\subsection{Spectral discrete problem}
Now we introduce the VEM discretization of problem \eqref{eq:pression}. To do this task, we require the global space $V_h$
defined in \eqref{eq:globa_space} together with the assumptions introduced in Section \ref{sec:virtual}.

Setting $\lambda_{h} := \omega_{h}^{2}+1$, the spectral problem reads as follows: Find $\lambda_h\in\mathbb{R}$ and $0\neq p_h\in V_h$ such that
\begin{equation}
\label{eq:spectral_disc}
\widehat{a}_{h}(p_h,v_h)=\lambda_{h} b_h(p_h,v_h) \quad \forall v_h \in V_h.
\end{equation}

It is possible to prove that $\widehat{a}_h(\cdot,\cdot)$ is $V_h$-coercive. Indeed, for $v_h\in V_h$, using \eqref{eqrefgt1} and \eqref{eqrefgt3} we have
\begin{multline*}
\widehat{a}_h(v_h,v_h)=\sum_{E\in\CT_h}a_h^E(v_h,v_h)+b_h^E(v_h,v_h) \\
\geq \sum_{E\in\CT_h}C_1\vertiii{v_h}^2_E + \min\{b_{0}(E),1\}b^E(v_h,v_h) \\
\geq \min\left\lbrace \min_{E\in\CT_h}\{ C_1(E)C_4(E)^{-1}\}, b_{0}(E),1\right\rbrace \sum_{E\in\CT_h}a^E(v_h,v_h) + b^E(v_h,v_h)\geq \underline{C}\|v_h\|_{1,\O}^2,
\end{multline*}
where $\displaystyle \underline{C}:=C(\rho,c)\min\left\lbrace \min_{E\in\CT_h}\{ C_1(E)C_4(E)^{-1}\}, b_{0}(E),1\right\rbrace$ and $C(\rho,c)$ is a positive constant depending on the density and  sound speed of the fluid. Moreover, thanks to \eqref{eqrefgt3}, we obtain for every $w_h \in V_h$:
\begin{multline}\label{eq:stabbb}
\vertiii{w_{h}}^{2} + \|w_{h}\|_{0,\O}^{2} \geq \sum_{E\in\CT_h} C_{4}^{-1}a^{E}(w_h,w_h) + \|w_{h}\|_{0,E}^{2} \\
\geq c^{2}\rho^{-1}\min_{E\in\CT_h}\{C_{4}(E)^{-1},1\} \left(|w_{h}|_{1,\O}^{2} + \|w_{h}\|_{0,\O}^{2}\right) = C_{1}^{*}\|w_{h}\|_{1,\O}^{2},
\end{multline}
where $\displaystyle C_{1}^* := c^{2}\rho^{-1}\min_{E\in\CT_h}\{C_{4}(E)^{-1},1\}$. 

On the other hand, the coercivity of $\widehat{a}_{h}(\cdot,\cdot)$ allows us to introduce the discrete solution operator $T_h: V_h\rightarrow V_h$ defined by $f_h\mapsto T_h f_h:=\widetilde{p}_h$, where $\widetilde{p}_h\in V_h$ is the solution of the discrete load problem 
\begin{equation*}
\label{eq:source_disc}
\widehat{a}_{h}(\widetilde{p}_h,v_h)= b_h(f_h,v_h) \quad \forall v_h \in V_h.
\end{equation*}

Let us remark that $T_h$ is well defined due to the Lax-Milgram's lemma and self-adjoint with respect to $\widehat{a}_h(\cdot,\cdot)$.  Moreover, it is easy to check that $(\lambda_h,p_h)\in\mathbb{R}\times V_h$ solves \eqref{eq:spectral_disc} if and only if $(\mu_h,p_h)\in\mathbb{R}\times V_h$ is an eigenpair of $T_h$ with $\mu_h:=1/\lambda_h$. 

Finally, we present the spectral characterization of $\textbf{T}_{h}$.
\begin{lemma}[Spectral characterization of $T_{h}$]\label{spectchar}
The spectrum of $T_{h}$ consists in  $M_{h} := \dim(V_{h})$ positive and real eigenvalues  with a certain multiplicity. 
\end{lemma}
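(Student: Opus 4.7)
The plan is to exploit the fact that $T_h$ is a self-adjoint operator on a finite dimensional inner product space, and then extract positivity from the Rayleigh quotient formula obtained via the definition of $T_h$.

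First I would observe that, thanks to the coercivity of $\widehat{a}_h(\cdot,\cdot)$ established immediately before the lemma, together with its obvious symmetry, the bilinear form $\widehat{a}_h(\cdot,\cdot)$ is an inner product on the finite dimensional space $V_h$. Consequently, $V_h$ endowed with $\widehat{a}_h(\cdot,\cdot)$ is a Euclidean space of dimension $M_h$. Since $T_h$ is self-adjoint with respect to $\widehat{a}_h(\cdot,\cdot)$ (as already remarked after the definition of $T_h$), the classical spectral theorem in finite dimension applies and yields a $\widehat{a}_h$-orthonormal basis of $V_h$ consisting of eigenvectors of $T_h$, with real eigenvalues. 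Counting multiplicities, this produces exactly $M_h$ real eigenvalues.

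Next I would establish positivity. Let $(\mu_h,w_h)\in\mathbb{R}\times V_h$ be an eigenpair of $T_h$ with $w_h\neq 0$. By definition of $T_h$, taking $f_h=w_h$ and testing with $v_h=w_h$ in the discrete source problem, I obtain
\begin{equation*}
\mu_h\,\widehat{a}_h(w_h,w_h)=\widehat{a}_h(T_h w_h,w_h)=b_h(w_h,w_h),
\end{equation*}
so that $\mu_h=b_h(w_h,w_h)/\widehat{a}_h(w_h,w_h)$. The denominator is strictly positive by coercivity of $\widehat{a}_h(\cdot,\cdot)$. For the numerator I would use the stability hypothesis on $S_0^E(\cdot,\cdot)$ and the $b^E$-orthogonality of $\Pi^E$ to deduce
\begin{equation*}
b_h^E(w_h,w_h)\ge \min\{1,b_0\}\bigl(b^E(\Pi^E w_h,\Pi^E w_h)+b^E(w_h-\Pi^E w_h,w_h-\Pi^E w_h)\bigr)=\min\{1,b_0\}\,b^E(w_h,w_h),
\end{equation*}
and summing over $E\in\CT_h$ gives $b_h(w_h,w_h)\ge \min\{1,b_0\}\,b(w_h,w_h)>0$ because $w_h\not\equiv 0$ and $b(\cdot,\cdot)$ is $\mathrm{L}^2$-like with the strictly positive weight $1/\rho$. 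Therefore $\mu_h>0$, which completes the characterization.

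The main obstacle, mild as it is, lies in checking positive definiteness of $b_h(\cdot,\cdot)$ on $V_h$; the argument above is essentially algebraic but relies crucially on the lower bound on $S_0^E(\cdot,\cdot)$ and on the fact that $\Pi^E$ is the $\mathrm{L}^2$-projection so that the cross term vanishes. Once this is in place, the counting, reality, and positivity of the eigenvalues all follow from the finite dimensional self-adjoint framework.
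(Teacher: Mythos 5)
Your argument is correct. The paper actually states this lemma without proof, so there is nothing to compare against; what you give is the standard argument one would expect: $\widehat{a}_h(\cdot,\cdot)$ is a genuine inner product on the finite-dimensional space $V_h$ by the coercivity established just before the lemma, $T_h$ is $\widehat{a}_h$-self-adjoint, and the Rayleigh quotient $\mu_h=b_h(w_h,w_h)/\widehat{a}_h(w_h,w_h)$ is strictly positive. The only point that genuinely needs checking is the positive definiteness of $b_h(\cdot,\cdot)$, and you handle it correctly: since $\rho$ is piecewise constant, $\Pi^E$ is also the $b^E$-orthogonal projection, the cross term vanishes, $\P_1(E)\subseteq V_h^E$ so the lower stability bound applies to $w_h-\Pi^E w_h$, and the Pythagorean identity gives $b_h^E(w_h,w_h)\ge\min\{1,b_0\}\,b^E(w_h,w_h)$.
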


On the other hand, we also have the following well known approximation result for polynomials in star-shaped domain (see for instance \cite{BS-2008}).
\begin{lemma}
\label{eq:polyapprox}
If the assumption {\bf A1} is satisfied, then there exists a constant
$C$, depending only on $k$ and $\g$, such that for every $\widetilde{s}$ with 
$0\le \widetilde{s}\le k$ and for every $v\in\H^{1+\widetilde{s}}(E)$, there exists
$v_{\pi}\in\P_k(E)$ such that
$$
\left\|v-v_{\pi}\right\|_{0,E}
+h_{E}\left|v-v_{\pi}\right|_{1,E}
\le Ch_{E}^{1+\widetilde{s}}\left\|v\right\|_{1+\widetilde{s},E}.
$$
\end{lemma}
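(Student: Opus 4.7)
The plan is to take $v_{\pi}$ as the averaged Taylor polynomial of Dupont and Scott, in the form reproduced in Brenner--Scott. By assumption \textbf{A1}, the polygon $E$ is star-shaped with respect to a ball $B\subset E$ of radius at least a constant multiple of $\g h_E$. Choosing a smooth cut-off $\phi$ supported in $B$ with $\int_B \phi=1$, I would set
$$
v_{\pi}(x):=\int_B \phi(y)\sum_{|\alpha|\le k}\frac{1}{\alpha!}D^{\alpha}v(y)(x-y)^{\alpha}\,dy,
$$
which lies in $\P_k(E)$ and reproduces every polynomial of degree at most $k$. An integration-by-parts rewriting of the above formula moves derivatives onto $\phi$, showing that the operator is in fact well defined on $\H^{1+\widetilde{s}}(E)$ for every admissible $\widetilde{s}$.

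For integer regularity $\widetilde{s}=j$ with $0\le j\le k$, I would pull $E$ back to a reference configuration of unit diameter, exploit the fact that $I-v_{\pi}$ annihilates $\P_j$, and invoke the Bramble--Hilbert lemma on the reference element. Since $E$ is star-shaped with chunkiness parameter bounded in terms of $\g$, the Bramble--Hilbert constant can be taken to depend only on $k$ and $\g$. Scaling back to $E$ introduces precisely the factor $h_E^{1+j}$ that matches both the $\L^2$-norm and the $h_E$-weighted $\H^1$-seminorm appearing on the left-hand side of the estimate.

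For fractional $\widetilde{s}\in(0,k)$ I would close the argument by real interpolation: the linear operator $v\mapsto v-v_{\pi}$ satisfies the stated inequality at the two neighbouring integer endpoints, so applying the $K$-method between $\H^{1+\lfloor\widetilde{s}\rfloor}(E)$ and $\H^{1+\lceil\widetilde{s}\rceil}(E)$ delivers the desired bound on $\H^{1+\widetilde{s}}(E)$ with the correct power of $h_E$. The main obstacle is to verify that the Bramble--Hilbert and interpolation constants on the reference element depend only on $\g$ and $k$, independently of the number of edges of $E$ or of their relative lengths; this is precisely what \textbf{A1} buys, and it is what makes the estimate insensitive to the small-edges feature that drives the rest of the paper.
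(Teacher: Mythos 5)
Your proposal is correct and coincides with what the paper does: the paper offers no proof of this lemma, simply citing the standard reference (Brenner--Scott), and the averaged Taylor polynomial together with the Bramble--Hilbert lemma, a scaling argument, and real interpolation for fractional $\widetilde{s}$ is exactly the argument behind that citation. Your closing remark correctly identifies the only point that needs care, namely that \textbf{A1} keeps the chunkiness parameter, and hence all constants, under control independently of the number and relative size of the edges.
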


Finally, we have the following result, that provides the existence of an interpolant operator on the virtual space (see \cite[Proposition 4.2]{MR3340705}).
\begin{lemma}
\label{eq:interpolant}
Under the assumption  {\bf A1}, for
each $\widetilde{s}$ with $0<\widetilde{s}\le 1$, there exist $\widehat{\sigma}$ and a constant $C$, depending only
on $k$, such that for every $v\in \H^{1+\widetilde{s}}(\O)$, there exists
$v_I\in\Vh$ that satisfies
\begin{align*}
\left|v-v_I\right|_{1+t,E}&\le Ch_{E}^{\widetilde{s}-t}\left|v\right|_{1+\widetilde{s},E}\qquad 0\leq t\leq\min\{\widehat{\sigma},\widetilde{s}\},\\ 
\left\|v-v_I\right\|_{0,E}
&\le Ch_{E}\left|v\right|_{1+\widetilde{s},E}.
\end{align*}
\end{lemma}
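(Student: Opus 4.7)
The plan is to construct $v_I \in V_h$ through a Cl\'ement/Scott--Zhang type interpolation based on the vertex degrees of freedom $\mathcal{V}_E^h$, and then derive the error estimates by combining the polynomial approximation of Lemma \ref{eq:polyapprox} with a local stability bound. Since $1+\widetilde{s} > 1$ in two spatial dimensions, Sobolev embedding guarantees that $v \in \H^{1+\widetilde{s}}(\O)$ is continuous, so vertex evaluation is well defined. For every mesh vertex $\nu$ I would set $v_I(\nu)$ equal to the value at $\nu$ of a local $\L^2$-projection of $v$ onto $\P_1$ over a small star-shaped patch surrounding $\nu$; since vertex values form a unisolvent set of degrees of freedom for $V_h^E$, this determines $v_I \in V_h$ uniquely.

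The next step is to establish polynomial invariance and local stability. Polynomial invariance, $(v_\pi)_I = v_\pi$ for every $v_\pi \in \P_1(E)$, holds because the vertex values distinguish linear polynomials and the local $\L^2$-projection reproduces them; this gives the identity $v - v_I = (v - v_\pi) - (v - v_\pi)_I$ on each $E$. Local stability of the form
\begin{equation*}
|w_I|_{1,E} + h_E^{-1}\|w_I\|_{0,E} \le C\bigl(|w|_{1,\omega_E} + h_E^{-1}\|w\|_{0,\omega_E}\bigr),
\end{equation*}
on a suitable patch $\omega_E$, follows from the Bramble--Hilbert lemma on the star-shaped subregions supplied by assumption \textbf{A1} together with a standard trace inequality on each edge. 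Choosing $v_\pi$ as in Lemma \ref{eq:polyapprox} and plugging $w = v - v_\pi$ into the stability bound yields the desired $t = 0$ estimate for $|v-v_I|_{1,E}$, and the analogous argument in the $\L^2$ norm gives the second inequality.

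For the fractional case $0 < t \le \min\{\widehat{\sigma},\widetilde{s}\}$, I would exploit the explicit description of the virtual functions: $v_I|_E$ is the solution of a local Poisson problem with right-hand side in $\P_1(E)$ and piecewise affine boundary data. Standard elliptic regularity on the star-shaped polygon $E$ then supplies a threshold exponent $\widehat{\sigma}>0$, determined by the largest interior angle, such that $v_I|_E \in \H^{1+\widehat{\sigma}}(E)$ with a quantitative bound on its seminorm. Real interpolation between this higher-regularity bound and the $\H^1$ estimate obtained in the previous paragraph then delivers the announced $\H^{1+t}$ estimate.

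The main obstacle will be showing that all the constants depend only on the shape-regularity parameter $\rho$ of assumption \textbf{A1} and not on the number of vertices of $E$ or the specific polygon geometry. Because there is no reference element available for a generic polygon, each scaling argument has to be carried out directly on $E$, and the regularity exponent $\widehat{\sigma}$ must be controlled uniformly across the admissible family of polygons. This is precisely where the star-shapedness hypothesis with respect to a ball of radius comparable to $h_E$ becomes indispensable, since it permits the application of Bramble--Hilbert type arguments and of the relevant elliptic regularity results with constants depending only on $\rho$.
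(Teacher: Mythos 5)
The paper never proves this lemma: it is quoted verbatim, with a pointer to \cite[Proposition 4.2]{MR3340705}, so there is no in-paper argument to measure yours against. Your sketch does follow the general strategy of that reference (a Cl\'ement-type quasi-interpolant driven by the vertex degrees of freedom, polynomial invariance, a local stability bound, and an elliptic-regularity threshold $\widehat{\sigma}$ governing the fractional orders), but as written it contains two genuine gaps.

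First, the local stability bound $|w_I|_{1,E}+h_{E}^{-1}\|w_I\|_{0,E}\le C\bigl(|w|_{1,\omega_E}+h_{E}^{-1}\|w\|_{0,\omega_E}\bigr)$ does not follow from the Bramble--Hilbert lemma plus ``a standard trace inequality.'' The function $w_I|_{E}$ is not a polynomial: it solves the local virtual problem ($\Delta w_I\in\P_1(E)$, piecewise affine trace, plus the moment condition), so one must (i) bound its $\H^1(E)$ seminorm by the $\H^{1/2}(\partial E)$ norm of its piecewise linear trace and (ii) bound that trace norm by the nodal values produced by the patch projections. Step (ii) is exactly where the edge geometry enters: the classical argument uses edgewise inverse estimates whose constants degenerate as edges shrink, and since the lemma is asserted under assumption \textbf{A1} alone (small edges permitted, which is the whole point of this paper), this step needs the refined analysis of \cite{BLR2017,MR3815658} rather than a routine trace bound. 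Second, the fractional-order estimate is not obtained by interpolating between the $t=0$ bound and mere membership $v_I|_{E}\in\H^{1+\widehat{\sigma}}(E)$: real interpolation requires the quantitative endpoint estimate $|v-v_I|_{1+t,E}\le Ch_{E}^{\widetilde{s}-t}|v|_{1+\widetilde{s},E}$ at $t=\min\{\widehat{\sigma},\widetilde{s}\}$, which in turn demands an elliptic shift estimate for the local problem with constants uniform over the admissible polygons together with control of the $\H^{1/2+t}(\partial E)$ norm of the piecewise affine boundary data --- again an edge-dependent quantity. You correctly flag the uniformity issue in your closing paragraph, but the argument as proposed does not resolve it.
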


As a direct consequence of the above two lemmas, standard results on spectral approximation can be used (see \cite{MR2652780,MR0203473}). Observe that the operator $T_{h}$ is not well defined for any source $f \in \H^1(\O)$ since $b_{h}(\cdot,\cdot)$ is a stabilized bilinear form. This implies that the classical theory of compact operators cannot be employed directly. Inspired in \cite{MR4050542}, in order to fix it and taking adventage of the compactness of $T$, we introduce the operator $P_{h} : \L^{2}(\O) \rightarrow V_{h} \hookrightarrow \H^1(\O)$ defined by the following property: $b(P_{h}u - u, v_{h}) = 0$ for every $v_{h} \in V_{h}$. Is easy to check that $\|P_{h}u\|_{0,\O} \leq \|u\|_{0,\O}$. Now, we define the operator $\widehat{T}_{h} : \H^1(\O) \rightarrow V_{h}$, which is well-defined for any source $f \in \H^1(\O)$. Moreover, the spectra of $T_{h}$ and $\widehat{T}_{h}$ coincide, and the same for the eigenfunctions of $T_{h}$ and $\widehat{T}_{h}$.   The first task  is to prove the convergence in norm of $\widehat{T}_{h}$ to $T$. We begin with the following result.
\begin{lemma}
\label{lmm:conv_norm}
There exists a constant $C>0$ independent of $h$  such that for all $f\in \H^1(\O)$, the following estimate holds
\begin{equation*}
\|(T - \widehat{T}_h)f\|_{1,\O}\leq \mathcal{C}h^s\|f\|_{1,\O},
\end{equation*}
where $s$ is the regularity index given by Lemma \ref{lmm:regularity}.
\end{lemma}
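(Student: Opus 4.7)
The plan is a Céa--type error argument in the full norm $\|\cdot\|_{1,\O}$ that exploits the global coercivity $\widehat{a}_h(v_h,v_h)\geq\underline{C}\|v_h\|_{1,\O}^2$ already established in this section, together with the approximation results of Lemmas~\ref{eq:polyapprox} and~\ref{eq:interpolant}. Setting $\widetilde{p}:=Tf$ and $\widetilde{p}_h:=\widehat{T}_hf$, I would first split
\begin{equation*}
\|\widetilde{p}-\widetilde{p}_h\|_{1,\O}\leq\|\widetilde{p}-\widetilde{p}_I\|_{1,\O}+\|\widetilde{p}_I-\widetilde{p}_h\|_{1,\O},
\end{equation*}
where $\widetilde{p}_I\in V_h$ is the virtual interpolant. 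Since Lemma~\ref{lmm:regularity} yields $\widetilde{p}\in\H^{1+s}(\O)$ with $\|\widetilde{p}\|_{1+s}\leq C\|f\|_{1,\O}$, Lemma~\ref{eq:interpolant} controls the first summand by $\mathcal{C}h^s\|f\|_{1,\O}$ as desired.

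For the second summand, testing coercivity against $v_h:=\widetilde{p}_I-\widetilde{p}_h$ gives
\begin{equation*}
\underline{C}\|v_h\|_{1,\O}^2\leq\widehat{a}_h(\widetilde{p}_I,v_h)-b_h(P_hf,v_h),
\end{equation*}
and inserting $\pm\widehat{a}(\widetilde{p},v_h)=\pm b(f,v_h)$ recasts the right-hand side as the sum of two consistency errors
\begin{equation*}
\bigl[\widehat{a}_h(\widetilde{p}_I,v_h)-\widehat{a}(\widetilde{p},v_h)\bigr]+\bigl[b(f,v_h)-b_h(P_hf,v_h)\bigr].
\end{equation*}
The first bracket is the standard VEM consistency gap: splitting it elementwise, introducing the $\mathbb{P}_1$-approximation $\widetilde{p}_\pi$ from Lemma~\ref{eq:polyapprox}, and invoking polynomial consistency of $a_h^E$ and $b_h^E$ reduces the integrand on each $E$ to $a_h^E(\widetilde{p}_I-\widetilde{p}_\pi,v_h)-a^E(\widetilde{p}-\widetilde{p}_\pi,v_h)$ together with the analogous $b$-piece. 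The stability bounds \eqref{eqrefgt1}--\eqref{eqrefgt4}, combined with Lemmas~\ref{eq:polyapprox} and~\ref{eq:interpolant}, then control this piece by $h_E^s\|\widetilde{p}\|_{1+s,E}\|v_h\|_{1,E}$, summable to $h^s\|f\|_{1,\O}\|v_h\|_{1,\O}$.

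For the second bracket I would use the defining property $b(P_hf-f,v_h)=0$ for $v_h\in V_h$ to collapse the gap to $b(P_hf,v_h)-b_h(P_hf,v_h)$, and then treat it by the same polynomial-approximation/stability trick, using the $\L^2$-boundedness $\|P_hf\|_{0,\O}\leq\|f\|_{0,\O}\leq\|f\|_{1,\O}$. Cauchy--Schwarz, summation over $E$, and division by $\|v_h\|_{1,\O}$ then close the argument. The main obstacle I foresee is precisely this second bracket: unlike the Steklov setting of \cite{MR4284360}, the mass form here \emph{is} stabilized, and since $P_hf$ possesses no a priori regularity beyond $\L^2$-boundedness, the polynomial-approximation step cannot be applied directly to $P_hf$. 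The remedy is to absorb the small-edge dependence into the seminorm $\vertiii{\cdot}$ via the elementwise stabilization bounds rather than into the full $\H^1$-seminorm; this is exactly the point at which the new technique of the present paper, forced by the lack of equivalence between $\vertiii{\cdot}$ and $\|\cdot\|_{1,\O}$, comes into play.
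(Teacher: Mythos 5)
Your architecture is the paper's: triangle inequality through the interpolant $\widetilde{p}_I$, a consistency split via $\widetilde{p}_\pi$, and the projector $P_h$ to handle the $b$ versus $b_h$ gap. The genuine gap is in how you close the argument. You start from $\underline{C}\|v_h\|_{1,\O}^2\leq\widehat{a}_h(v_h,v_h)$, claim the consistency term is controlled by $h_E^s\|\widetilde{p}\|_{1+s,E}\|v_h\|_{1,E}$, and finish by ``division by $\|v_h\|_{1,\O}$.'' This step fails in the small--edge regime: Cauchy--Schwarz applied to $a_h^E$ produces the factor $\vertiii{v_h}_E$, and converting it to $\|v_h\|_{1,E}$ requires $\vertiii{v_h}_E\lesssim\|v_h\|_{1,E}$, which is exactly the direction that is \emph{not} available (the boundary stabilization $S^E$ cannot be bounded by the $\H^1(E)$ norm uniformly when edges degenerate; only the reverse bounds \eqref{eqrefgt3} and \eqref{eq:stabbb} hold). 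You acknowledge the lack of equivalence in your last sentence, but you attach the remedy to the wrong term and never repair the division. The paper's actual mechanism is to run the whole C\'ea argument in the auxiliary quantity $\vertiii{v_h}^2+\|v_h\|_{0,\O}^2$: bound it \emph{above} by $C\,\widehat{a}_h(v_h,v_h)$ using \eqref{eqrefgt1} and the stability of $b_h^E$, estimate both consistency contributions by $Ch^s\|f\|_{1,\O}\bigl(\vertiii{v_h}^2+\|v_h\|_{0,\O}^2\bigr)^{1/2}$, divide by that same quantity, and only at the very end pass to $\|v_h\|_{1,\O}$ through the one-sided inequality \eqref{eq:stabbb}. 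The $\H^1$-coercivity of $\widehat{a}_h$ is never invoked in the proof.

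A secondary imprecision: for the term $b(f,v_h)-b_h(P_hf,v_h)$ the obstacle is real ($P_hf$ is only in $\L^2$), but the cure is not the bare bound $\|P_hf\|_{0,\O}\leq\|f\|_{0,\O}$. One inserts $f_\pi$ using the polynomial consistency of $b_h^E$ and then exploits the best-approximation property of $P_h$ to transfer the approximation to $f$ itself, namely $\|P_hf-f_\pi\|_{0,\O}\leq\|f-f_I\|_{0,\O}+\|f-f_\pi\|_{0,\O}\leq Ch^s\|f\|_{1,\O}$, which is legitimate because the source $f$ lies in $\H^1(\O)$. With these two repairs your outline coincides with the paper's proof.
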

\begin{proof}
Let $f\in \H^1(\O)$ be such that $\widetilde{p}:=Tf$ and $\widetilde{p}_h:=\widehat{T}_hf$. Let $\widetilde{p}_I\in V_h$ be the interpolant of $\widetilde{p}\in \H^1(\O)$ given by Lemma \ref{eq:interpolant}. From the triangle inequality we have
\begin{equation*}
\|(T-\widehat{T}_h)f\|_{1,\O}=\|\widetilde{p}-\widetilde{p}_{h}\|_{1,\O}\leq \|\widetilde{p}-\widetilde{p}_I\|_{1,\O}+\|\widetilde{p}_I-\widetilde{p}_h\|_{1,\O}. 
\end{equation*}
We observe that the first term on the inequality above is immediately controlled by using Lemma \ref{eq:interpolant}, obtaining $ \|\widetilde{p}-\widetilde{p}_I\|_{1,\O}\leq Ch^s\|f\|_{1,\O}$. For the second term, we invoke the $V_h$-coercivity of $\widehat{a}_h(\cdot,\cdot)$, defining $v_h:=\widetilde{p}_I-\widetilde{p}_h$ and using \eqref{eqrefgt1} we have 
\begin{multline*}
\vertiii{v_h}^{2} + \|v_{h}\|_{0,\O}^{2} \leq \sum_{E\in\CT_h} C_{1}^{-1}a_{h}^{E}(v_h,v_h) + b^{E}(v_h,v_h) \\
\leq C\max_{E\in\CT_h}\{C_{1}(E)^{-1},1\}\widehat{a}_h(v_h,v_h) = \max_{E\in\CT_h}\{C_{1}(E)^{-1},1\}\left(\widehat{a}_h(\widetilde{p}_I, v_h) - \widehat{a}_h(\widetilde{p}_h,v_h)\right) \\
= \max_{E\in\CT_h}\{C_{1}(E)^{-1},1\}\left(\underbrace{\widehat{a}_h(\widetilde{p}_I-\widetilde{p}_\pi,v_h) - \widehat{a}(\widetilde{p}-\widetilde{p}_\pi,v_h)}_{\textrm{(I)}} + \underbrace{b(f,v_h) - b_h(P_hf,v_h)}_{\textrm{(II)}}\right),
\end{multline*}
where in the last equality we have used the consistency property for $\widehat{a}_{h}(\cdot,\cdot)$. Now, we need to estimate the contributions on the right-hand side. For \textrm{(I)}, using triangle inequality, \eqref{eqrefgt1} and Cauchy-Schwarz inequality, we obtain 
\begin{multline*}
\textrm{(I)} \leq \displaystyle{\sum_{E\in\CT_h} |\widehat{a}_h^E(\widetilde{p}_I - \widetilde{p}_\pi, v_h) - \widehat{a}^E(\widetilde{p}-\widetilde{p}_\pi,v_h)|} \\
\leq \sum_{E\in\CT_h} |a_h^E(\widetilde{p}_I - \widetilde{p}_\pi, v_h) - a^E(\widetilde{p}-\widetilde{p}_\pi,v_h)| + |b_h^E(\widetilde{p}_I - \widetilde{p}_\pi, v_h) - b^E(\widetilde{p}-\widetilde{p}_\pi,v_h)| \\
\leq \sum_{E\in\CT_h} c^{2}\rho^{-1}C_2(E)\vertiii{\widetilde{p}_I-\widetilde{p}_\pi}_E\vertiii{v_h}_E + c^{2}\rho^{-1}|\widetilde{p}-\widetilde{p}_\pi|_{1,E}|v_h|_{1,E} \\ 
+ \rho^{-1}b_{1}\|\widetilde{p}_I-\widetilde{p}_\pi\|_{0,\O}\|v_{h}\|_{0,\O} + \rho^{-1}\|\widetilde{p}-\widetilde{p}_\pi\|_{0,\O}\|v_{h}\|_{0,\O} \\
\leq \sum_{E\in\CT_h} c^{2}\rho^{-1}C_2(E)\vertiii{\widetilde{p}_I-\widetilde{p}_\pi}_E\vertiii{v_h}_E + c^{2}\rho^{-1}\sqrt{C_{4}}|\widetilde{p}-\widetilde{p}_\pi|_{1,E}\vertiii{v_h}_{E} \\ 
+ \rho^{-1}b_{1}\|\widetilde{p}_I-\widetilde{p}_\pi\|_{0,\O}\|v_{h}\|_{0,\O} + \rho^{-1}\|\widetilde{p}-\widetilde{p}_\pi\|_{0,\O}\|v_{h}\|_{0,\O} \\
\leq C_{2}^{*}\left(\vertiii{\widetilde{p}_I-\widetilde{p}_\pi} + |\widetilde{p}-\widetilde{p}_\pi|_{1,h} + \|\widetilde{p}_I - \widetilde{p}_\pi\|_{0,\O}\right. \\
\left. + \|\widetilde{p}-\widetilde{p}_\pi\|_{0,\O}\right)\left(\vertiii{v_h}^{2} + \|v_h\|_{0,\O}^{2}\right)^{1/2},
\end{multline*}
where $C_{2}^*$ is a positive constant given by $$\displaystyle C_{2}^* := \max_{E\in\CT_h}\left\lbrace c^{2}\rho^{-1}C_{2}(E),c^{2}\rho^{-1} \sqrt{C_{4}(E)},\rho^{-1}b_{1}(E),\rho^{-1}\right\rbrace.$$ 

On the other hand, using the definition of $\vertiii{\cdot}$ we obtain
\begin{multline*}
\vertiii{\widetilde{p}-\widetilde{p}_I}^{2} = \displaystyle{\sum_{E\in\CT_h} \left[a^{E}(\PiK(\widetilde{p}-\widetilde{p}_I),\PiK(\widetilde{p}-\widetilde{p}_I))\right.} \\ 
\left. + S^{E}(\widetilde{p}-\widetilde{p}_I - \overline{\widetilde{p}-\widetilde{p}_I}, \widetilde{p}-\widetilde{p}_I - \overline{\widetilde{p}-\widetilde{p}_I})\right] \\
= \displaystyle{\sum_{E\in\CT_h} c^{2}\rho^{-1}|\PiK(\widetilde{p}-\widetilde{p}_I)|_{1,E}^{2} + S^{E}(\widetilde{p}-\widetilde{p}_I, \widetilde{p}-\widetilde{p}_I)} \\
= \displaystyle{\sum_{E\in\CT_h} c^{2}\rho^{-1}|\PiK(\widetilde{p}-\widetilde{p}_I)|_{1,E}^{2} + h_E|\widetilde{p}-\widetilde{p}_I|_{1,\partial E}^{2}} \\
\leq  \displaystyle{\sum_{E\in\CT_h} c^{2}\rho^{-1}|\widetilde{p}-\widetilde{p}_I|_{1,E}^{2} + h_E|\widetilde{p}-\widetilde{p}_I|_{1,\partial E}^{2}} \leq \max\{c^{2}\rho^{-1},1\}\displaystyle{\sum_{E\in\CT_h} h_E^{2s}|\widetilde{p}|_{1+s,E}^{2}} \\ 
\leq \max\{c^{2}\rho^{-1},1\}h^{2s}|\widetilde{p}|_{1+s,\O}^{2},
\end{multline*}
where in the last inequality we have used a scaled trace inequality. Hence, we obtain
\begin{equation*}
\vertiii{\widetilde{p}-\widetilde{p}_I} \leq \max\{c\rho^{-1/2},1\}h^{s}|\widetilde{p}|_{1+s,\O}.
\end{equation*}
The same arguments can be used for $\vertiii{\widetilde{p}-\widetilde{p}_\pi}$, obtaining 
\begin{equation*} 
\vertiii{\widetilde{p}-\widetilde{p}_\pi} \leq \max\{c\rho^{-1/2},1\}h^{s}|\widetilde{p}|_{1+s,\O}.
\end{equation*}
On the other hand, using Lemma \ref{eq:polyapprox} we obtain $|\widetilde{p}-\widetilde{p}_\pi|_{1,h} \leq Ch^{s}|\widetilde{p}|_{1+s,\O}$.
Then, using triangle inequality and the previous estimates, we conclude for \textrm{(I)} that 
\begin{equation*}
\textrm{(I)} \leq C_{3}^{*}h^{s}|\widetilde{p}|_{1+s,\O}\left(\vertiii{v_h}^{2} + \|v_{h}\|_{0,\O}^{2}\right)^{1/2}, \quad C_{3}^{*} := \max\{c\rho^{-1/2},1\}C_{2}^{*}.
\end{equation*}

Now, for \textrm{(II)} we have
\begin{multline*}
|\textrm{(II)}| \leq \sum_{E\in\CT_h} |b^{E}(P_hf,v_h) - b_h^E(P_hf,v_h)| \\ 
= \sum_{E\in\CT_h} |b^{E}(P_hf - f_\pi,v_h) - b_h^E(P_hf - f_\pi,v_h)| \\
\leq \rho^{-1}\max_{E\in\CT_h}\{b_{1}(E),1\}\|P_hf - f_\pi\|_{0,\O}\|v_{h}\|_{0,\O} \\
\leq \rho^{-1}\max_{E\in\CT_h}\{b_{1}(E),1\}(\|f-f_{I}\|_{0,\O} + \|f-f_\pi\|_{0,\O})\left(\vertiii{v_h}^{2} + \|v_{h}\|_{0,\O}^{2}\right)^{1/2} \\
\leq \rho^{-1}\max_{E\in\CT_h}\{b_{1}(E),1\}h^{s}\|f\|_{1,\O}\left(\vertiii{v_h}^{2} + \|v_{h}\|_{0,\O}^{2}\right)^{1/2},
\end{multline*}
where we have used Lemmas \ref{eq:interpolant}, \ref{eq:polyapprox}, and the best approximation property for $P_h$. Therefore, using Lemma \ref{lmm:regularity} we obtain
\begin{equation}\label{eq:triplel2}
\vertiii{v_h}^{2} + \|v_h\|_{0,\O}^{2} \leq C_4^{*}h^{s}\|f\|_{1,\O}\left(\vertiii{v_h}^{2} + \|v_h\|_{0,\O}^{2}\right)^{1/2},
\end{equation}
where $\displaystyle C_{4}^{*} := \max_{E\in\CT_h}\{C_{3}^{*},\rho^{-1}b_{1}(E),\rho^{-1}\}$ and hence, combining the previous estimate with \eqref{eq:stabbb}, we obtain $\|v_h\|_{1,\O} \leq C_4^*C_1^{*-1/2}h^{s}\|f\|_{1,\O}$.
Finally, defining the constant  $\mathcal{C} := \max\{C_4^*C_1^{*-1/2},1\}$, we conclude the proof.
\end{proof}

\begin{remark}
Let us remark that  \eqref{eq:triplel2} will be useful to derive the double order of convergence for eigenvalues.
\end{remark}

From the previous Lemma, we can conclude the convergence in norm for $\widehat{T}_{h}$ to $T$ as $h \to 0$. This is a key ingredient in order to obtain error estimates for eigenvalues and eigenfunctions.

We present as a consequence of the above, that the proposed method does not introduce spurious eigenvalues. In practical terms, this implies that isolated parts of $\sp(T)$ are approximated by isolated parts of $\sp(\widehat{T}_h)$ (see \cite{MR0203473}). This is contained in the following result.

\begin{theorem}
Let $G \subset \mathbb{C}$  be an open set containing $\sp(T)$. Then, there exists $h_0>0$ such that $\sp(\widehat{T}_h)\subset G$ for all $h<h_0$.
\end{theorem}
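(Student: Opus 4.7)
The plan is to deduce this spectral inclusion from the norm convergence $\widehat{T}_h \to T$ established in Lemma~\ref{lmm:conv_norm}, using the classical resolvent/Neumann series perturbation argument (see \cite{MR0203473,MR2652780}).

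First, I would localize the problem to a bounded region of $\mathbb{C}$. Since $T$ is compact, $\sp(T)$ is a compact subset of $\mathbb{R}$, hence there exists $R>0$ with $\sp(T)\subset\overline{B(0,R)}$. Moreover, the operators $\widehat{T}_h$ are uniformly bounded in $\mathcal{L}(\H^1(\O))$: for any $f\in\H^1(\O)$, the $V_h$-coercivity of $\widehat{a}_h(\cdot,\cdot)$ with constant $\underline{C}$ independent of $h$ (guaranteed by the small-edges stabilization bounds \eqref{eqrefgt1}--\eqref{eqrefgt4}), combined with the continuity of $b_h(\cdot,\cdot)$, yields $\|\widehat{T}_h f\|_{1,\O}\le C\|f\|_{1,\O}$ uniformly in $h$. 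Therefore, enlarging $R$ if necessary, $\sp(\widehat{T}_h)\subset\overline{B(0,R)}$ for every $h$.

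Next, consider the compact set $K:=\overline{B(0,R)}\setminus G$, which is disjoint from $\sp(T)$. The resolvent map $z\mapsto(zI-T)^{-1}$ is continuous on $\mathbb{C}\setminus\sp(T)$ with values in $\mathcal{L}(\H^1(\O))$, so
\begin{equation*}
M:=\sup_{z\in K}\|(zI-T)^{-1}\|_{\mathcal{L}(\H^1(\O))}<\infty.
\end{equation*}
For $z\in K$, I would use the factorization
\begin{equation*}
zI-\widehat{T}_h=(zI-T)\bigl[I-(zI-T)^{-1}(\widehat{T}_h-T)\bigr].
\end{equation*}
By Lemma~\ref{lmm:conv_norm}, $\|\widehat{T}_h-T\|_{\mathcal{L}(\H^1(\O))}\le\mathcal{C}h^s\to 0$. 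Choosing $h_0>0$ such that $M\mathcal{C}h^s<1/2$ for all $h<h_0$, a Neumann series argument shows that the bracketed operator is invertible, and hence so is $zI-\widehat{T}_h$, uniformly in $z\in K$. Consequently $K\cap\sp(\widehat{T}_h)=\emptyset$, and combined with $\sp(\widehat{T}_h)\subset\overline{B(0,R)}$, this yields $\sp(\widehat{T}_h)\subset G$ for every $h<h_0$.

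The only delicate point is the $h$-uniform boundedness of $\|\widehat{T}_h\|_{\mathcal{L}(\H^1(\O))}$, since the $V_h$-coercivity constant $\underline{C}$ depends on the stabilization constants $C_1(E),C_4(E),b_0(E)$; the small-edges analysis of \cite{BLR2017} guarantees these to be uniform in $h$, which is precisely what makes the present argument go through. Beyond this verification, the remainder reduces to the standard norm-resolvent convergence machinery.
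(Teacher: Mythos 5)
Your proposal is correct and takes essentially the same route as the paper: the paper states this theorem as a direct consequence of the norm convergence $\|T-\widehat{T}_h\|_{\mathcal{L}(\H^1(\O))}\le \mathcal{C}h^s$ from Lemma~\ref{lmm:conv_norm}, simply invoking the classical theory of \cite{MR0203473}, and your resolvent factorization with the Neumann-series argument is precisely the standard proof behind that citation. The point you flag about the $h$-uniformity of the coercivity and stability constants is the right thing to check, and it is exactly what the small-edges estimates \eqref{eqrefgt1}--\eqref{eqrefgt4} provide.
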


Let us remark that that the spectra of $T_{h}$ and $\widehat{T}_{h}$ coincide. According to this, let $\mu$ be an isolated eigenvalue if $T$ with multiplicity $m$ and let $\mathcal{E}$ be its associated eigenspace. Then, there exist $m$ eigenvalues $\mu_h^{(1)},\ldots, \mu_h^{(m)}$ of $T_h$, repeated according to their respective multiplicities that converge to $\mu$. Now, let $\mathcal{E}_h$ be the  direct sum of the associated eigenspaces of $\mu_h^{(1)},\ldots, \mu_h^{(m)}$. With these definitions at hand, now we focus on the analysis of error estimates.

 \subsection{Error estimates}
Now our task is to obtain error estimates for the approximation of the eigenvalues and eigenfunctions. With this goal in mind, first  we need to recall the definition of the  gap $\widehat{\delta}$ between two closed subspaces $\CM$ and $\CN$ of  $\H^1(\Omega)$:
\begin{equation*}
\widehat{\delta}(\CM,\CN) := \max\{\delta (\CM, \CN), \delta (\CM, \CN)\},
\end{equation*}
where $\delta (\CM,\CN):= \underset{x \in \CM : \|x\|_{1, \Omega} = 1}{\sup} \left\lbrace \underset{y \in \CN}{\inf} \quad \|x-y\|_{1, \Omega} \right\rbrace$.

The following result provides an error estimate for the eigenfunctions and eigenvalues.
\begin{theorem}
\label{thm:errors1} 
The following estimates hold
\begin{equation*}
\widehat{\delta}(\mathcal{E}, \mathcal{E}_{h}) \leq \mathcal{C}h^{r}, \quad |\mu - \mu_{h}^{(i)}| \leq \mathcal{C}h^{r}, \quad i = 1, \ldots, m,
\end{equation*}
where $r$ is given in Lemma \ref{lmm:regularity}.
\end{theorem}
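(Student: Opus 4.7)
The strategy is to invoke the Babuška--Osborn spectral approximation framework for compact operators, as formulated in \cite{MR2652780,MR0203473}. Since Lemma \ref{lmm:conv_norm} already furnishes the norm convergence $\widehat{T}_{h}\to T$ in $\mathcal{L}(\H^{1}(\O))$, the isolated eigenvalue $\mu$ of $T$ with multiplicity $m$ is approximated by exactly $m$ eigenvalues of $\widehat{T}_{h}$ (counted with multiplicity), and the classical perturbation theorem bounds both $\widehat{\delta}(\mathcal{E},\mathcal{E}_{h})$ and each $|\mu-\mu_{h}^{(i)}|$, in its single-order form, by a constant multiple of $\|(T-\widehat{T}_{h})|_{\mathcal{E}}\|_{\mathcal{L}(\H^{1}(\O))}$. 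The proof therefore reduces to estimating this restricted operator norm; note that the improved (double) order for eigenvalues is deferred, as anticipated in the remark following Lemma \ref{lmm:conv_norm}.

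To sharpen the generic bound of Lemma \ref{lmm:conv_norm} --- which only delivers $h^{s}$ with $s=\min\{r,1\}$ --- I would exploit the better regularity of eigenfunctions supplied by Lemma \ref{lmm:regularity}(ii): every $p\in\mathcal{E}$ lies in $\H^{1+r}(\O)$ with $\|p\|_{1+r,\O}\leq C\|p\|_{1,\O}$. Running the proof of Lemma \ref{lmm:conv_norm} verbatim with this $p$ playing the role of the source $f$ --- so that $Tp=\mu p$ inherits the same $\H^{1+r}$ regularity, and the approximants $\widetilde{p}_{I},\widetilde{p}_{\pi}$ given by Lemmas \ref{eq:interpolant} and \ref{eq:polyapprox} can be chosen with the exponent $r$ in place of $s$ --- one obtains
$$\|(T-\widehat{T}_{h})p\|_{1,\O}\leq \mathcal{C}\,h^{r}\|p\|_{1,\O}\qquad\forall\,p\in\mathcal{E},$$
and hence $\|(T-\widehat{T}_{h})|_{\mathcal{E}}\|_{\mathcal{L}(\H^{1}(\O))}\leq \mathcal{C}h^{r}$. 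Substituting this bound into the two Babuška--Osborn estimates yields the claimed $\mathcal{C}h^{r}$ decay for both the gap and every eigenvalue difference.

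The main obstacle is not conceptual but a bookkeeping issue on the uniformity of constants: because the discrete forms are stabilized element-wise, the quantities $C_{1}(E),C_{2}(E),C_{4}(E),b_{1}(E)$ appearing in the proof of Lemma \ref{lmm:conv_norm} depend on each polygon and, in the small-edges setting, must be controlled independently of $h$. One must therefore check that, under assumption \textbf{A1} together with the stabilization of Section \ref{sec:virtual}, these element-wise constants are uniformly bounded, so that the final constant $\mathcal{C}$ depends only on $\mu$, on the eigenspace $\mathcal{E}$, and on the mesh-regularity parameters, and \emph{not} on $h$ or on the presence of small edges. Once this uniformity is established, everything else is a direct transcription of the argument of Lemma \ref{lmm:conv_norm} with the regularity exponent raised from $s$ to $r$.
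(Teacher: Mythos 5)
Your proposal is correct and follows essentially the same route as the paper: the paper's proof likewise consists of invoking the Babu\v{s}ka--Osborn theory for compact operators (Theorems 7.1 and 7.3 of that reference) on the strength of the norm convergence from Lemma \ref{lmm:conv_norm}. Your additional remarks --- restricting $T-\widehat{T}_h$ to $\mathcal{E}$ and using the eigenfunction regularity of Lemma \ref{lmm:regularity} to replace $s$ by $r$, and checking uniformity of the element-wise stability constants --- are sensible elaborations of details the paper leaves implicit, not a different argument.
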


\begin{proof}
Thanks to Lemma \ref{lmm:conv_norm},  the proof is a direct consequence of the compact operators theory  of  Babu\v{s}ka-Osborn (see \cite[Theorems 7.1 and 7.3]{BO}).
\end{proof}

Let us observe that Theorem \ref{thm:errors1} is a result with a preliminary error estimate for the eigenvalues. However, it is possible to improve the order of convergence for the eigenvalues as we prove on the following result.
\begin{theorem}
The following estimate holds
	\begin{equation*}
	\label{eq:double_order}
		|\lambda-\lambda_h^{(i)}|\leq \mathcal{K} h^{2r},
	\end{equation*}
where $\mathcal{K}>0$ is a constant independent of $h$ and  $r$ is given in Lemma \ref{lmm:regularity}.
\end{theorem}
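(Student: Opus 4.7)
The strategy is the classical Babu\v{s}ka--Osborn quadratic identity for eigenvalues, adapted to the inconsistent (stabilized) VEM setting. Normalizing the eigenfunctions so that $b_h(p_h,p_h)=b(p,p)=1$ and exploiting the characterizations $\widehat{a}(p,v)=\lambda\,b(p,v)$ for every $v\in\H^{1}(\O)$ and $\widehat{a}_h(p_h,v_h)=\lambda_h\,b_h(p_h,v_h)$ for every $v_h\in V_h$, a direct computation yields the identity
\begin{equation*}
\lambda-\lambda_h \;=\; \widehat{a}(p-p_h,p-p_h) - \lambda\,b(p-p_h,p-p_h) + \bigl[\widehat{a}-\widehat{a}_h\bigr](p_h,p_h) - \lambda_h\bigl[b-b_h\bigr](p_h,p_h).
\end{equation*}
The proof then reduces to controlling the four terms on the right-hand side by $\mathcal{O}(h^{2r})$.

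For the first two terms, continuity of $\widehat{a}(\cdot,\cdot)$ on $\H^{1}(\O)$ and of $b(\cdot,\cdot)$ on $\L^{2}(\O)$, combined with the $\H^{1}$--rate $\|p-p_h\|_{1,\O}\le\mathcal{C}h^{r}$ provided by Theorem~\ref{thm:errors1}, immediately give
\begin{equation*}
\bigl|\widehat{a}(p-p_h,p-p_h) - \lambda\,b(p-p_h,p-p_h)\bigr| \;\le\; C\,\|p-p_h\|_{1,\O}^{2} \;\le\; \mathcal{C}\,h^{2r}.
\end{equation*}

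The consistency contributions $[\widehat{a}-\widehat{a}_h](p_h,p_h)$ and $[b-b_h](p_h,p_h)$ are treated element by element by inserting the polynomial approximant $p_{\pi}\in\P_1(E)$ of $p$ furnished by Lemma~\ref{eq:polyapprox}. Writing $p_h = (p_h-p_{\pi}) + p_{\pi}$ and using the polynomial consistency of $\widehat{a}_h^{E}$ and $b_h^{E}$ on $\P_1(E)$, the contribution of the polynomial part drops out, and one is left with terms of the form $[\widehat{a}^{E}-\widehat{a}_h^{E}](p_h-p_{\pi},p_h-p_{\pi})$ together with mixed counterparts. Decomposing further through $p_h-p_{\pi}=(p_h-p)+(p-p_{\pi})$ and applying the stabilization bounds \eqref{eqrefgt1}--\eqref{eqrefgt4}, the seminorm estimate $\vertiii{p-p_{\pi}}\le C h^{r}\,|p|_{1+r,\O}$ (already established in the proof of Lemma~\ref{lmm:conv_norm}), and the $\H^{1}$--error $\|p-p_h\|_{1,\O}\le\mathcal{C}h^{r}$, we obtain a factor $h^{r}$ from each of the two slots, hence an $\mathcal{O}(h^{2r})$ bound. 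The bilinear form $b(\cdot,\cdot)$ is handled in an analogous way, now relying on the stability constants $b_0,b^{1}$ and the $\L^{2}$--approximation estimates of Lemmas~\ref{eq:polyapprox}--\ref{eq:interpolant}. Combining the four bounds, noting that $b_h(p_h,p_h)$ is bounded below by a mesh-independent constant (by the normalization and the convergence of $p_h$ to $p$), yields $|\lambda-\lambda_h^{(i)}|\le \mathcal{K}h^{2r}$.

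The main obstacle I anticipate lies in the \emph{small-edges dependence} of the constants in \eqref{eqrefgt1}--\eqref{eqrefgt4}: since we are squaring $\H^{1}$--type errors, one must verify that no unfavorable product of mesh-dependent constants enters the final bound. The critical cross term is $[\widehat{a}^{E}-\widehat{a}_h^{E}](p-p_h,\,p-p_{\pi})$, where a seminorm-type bound via $\vertiii{\cdot}_{E}$ has to be combined with an $\H^{1}(E)$--bound, invoking \eqref{eq:stabbb} to pass between the two norms. This is exactly the delicate step already treated in the proof of Lemma~\ref{lmm:conv_norm}, so the argument should close with the same constants $C_{1}^{*},\ldots,C_{4}^{*}$ introduced there, and the thesis follows.
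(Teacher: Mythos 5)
Your proposal follows essentially the same route as the paper's proof: the same quadratic eigenvalue identity obtained from symmetry and the two eigenvalue equations, the same splitting into a continuity term (bounded via the $\H^1$ rate $\|p-p_h\|_{1,\O}\le Ch^{r}$) plus two consistency terms handled by inserting the polynomial approximant $p_{\pi}$ and applying the stabilization/seminorm bounds, and the same lower bound on $b_h(p_h,p_h)$ to conclude. The only discrepancy is an immaterial sign on the first term of your identity (the correct version reads $(\lambda_h^{(i)}-\lambda)\,b(p_h,p_h)=\widehat{a}(p-p_h,p-p_h)-\lambda b(p-p_h,p-p_h)+[\widehat{a}_h-\widehat{a}](p_h,p_h)+\lambda_h^{(i)}[b-b_h](p_h,p_h)$), which does not affect the absolute-value estimate.
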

\begin{proof}
Let $(\l_h^{(i)},p_h)\in\mathbb{R}\times V_h$ be   a solution of \eqref{eq:spectral_disc} with $\left\|p_h\right\|_{1,\O}=1$. According to Theorem \ref{lmm:conv_norm}, there exists a solution $(\l,p)\in\mathbb{R}\times \H^1(\O)$ of the eigenvalue problem \eqref{eq:pression} such that $\left\|p-p_h\right\|_{1,\O}\leq Ch^{r}$. 

From the symmetry of the bilinear forms and the facts that $\widehat{a}(p,v)=\l b(p,v)$ for all $v\in\H ^1(\O)$ (cf.\eqref{eq:pression}) and $\widehat{a}_h(p_h,v_h)=\l_h^{(i)}b(p_h,v_h)$ for all $v_h\in\Vh$ (cf.\eqref{eq:spectral_disc}), we have
\begin{multline*}
\widehat{a}(p-p_h,p-p_h)-\l b(p-p_h,p-p_h)
 =\widehat{a}(p_h,p_h)-\l b(p_h,p_h)
\\
 =\left[\widehat{a}(p_h,p_h)-\widehat{a}_h(p_h,p_h)\right]
-\left(\l-\l_h^{(i)}\right)b(p_h,p_h),
\end{multline*}
from which we obtain the following identity:
\begin{multline*}
\label{eq:padra}
(\l_h^{(i)}-\l)b(p_h,p_h)
=\underbrace{\widehat{a}(p-p_h,p-p_h)-\l b(p-p_h,p-p_h)}_{T_1}\\
+\underbrace{\left[\widehat{a}_h(p_h,p_h)-\widehat{a}(p_h,p_h)\right]}_{T_2}+\underbrace{\lambda_h^{(i)}[b(p_h,p_h)-b_h(p_h,p_h)]}_{T_3}.
\end{multline*}
Now our task is to estimate the contributions $T_1$, $T_2$ and $T_3$ of the right-hand side. For the term $T_1$ we invoking Lemma \ref{lmm:conv_norm} in order to obtain
\begin{multline}
\label{eq:T1}
\displaystyle |T_1|\leq \mathcal{C}c^{2}\rho^{-1} \|p-p_h\|_{1,\O}^2+\|p-p_h\|_{0,\O}^2\\
\leq \max\{c^{2}\rho^{-1},1\}\|p-p_h\|_{1,\O}^{2}
\leq \mathcal{C}^{2}\max\{c^{2}\rho^{-1},1\}h^{2r}.
\end{multline}
The estimate for $T_2$ is obtained as follows
\begin{multline*}
|T_2|\leq \sum_{E\in\CT_h}| \widehat{a}_h^E(p_h,p_h)-\widehat{a}^E(p_h,p_h)|\\
= \sum_{E\in\CT_h} |\widehat{a}_h^E(p_h - p_\pi,p_h - p_\pi) - \widehat{a}^E(p_h - p_\pi,p_h - p_\pi)| \\
\leq \sum_{E\in\CT_h} c^{2}\rho^{-1}C_2(E)\vertiii{p_h - p_\pi}_E^2 + c^{2}\rho^{-1}|p_h - p_\pi|_{1,E}^2 + \rho^{-1}\max\{b_{1}^{E},1\}\|p_h-p_\pi\|_{0,\O}^{2} \\
\leq C_5^{*}\sum_{E\in\CT_h} \left(\vertiii{p - p_I}_E^2 +  \vertiii{p_h - p_I}_E^2 + \vertiii{p - p_\pi}_E^2 + |p - p_\pi|_{1,E}^2 \right. \\ 
\left.  + |p - p_h|_{1,E}^2 + \|p_{h} - p_{I}\|_{0,\O}^{2} + \|p - p_I\|_{0,\O}^{2} + \|p-p_\pi\|_{0,\O}^{2}\right),
\end{multline*}
where $\displaystyle C_{5}^{*} := \max_{E\in\CT_h}\{c^{2}\rho^{-1}C_2(E),c^{2}\rho^{-1},\rho^{-1}b_{1}(E),\rho^{-1}\}$. 
Now, invoking \eqref{eq:triplel2} and Lemmas \ref{eq:polyapprox}, \ref{eq:interpolant} and \ref{lmm:conv_norm}, we obtain
\begin{equation}
\label{eq:T2}
|T_2| \leq C_6^{*}h^{2r}, \quad C_{6}^{*} := C_{5}^{*}\max\{C_3^{*2}, \mathcal{C}^{2}, \rho^{-1},1\}.
\end{equation}
On the other hand, using approximation properties for $\Pi$, we obtain for $T_3$
\begin{equation}
\label{eq:T3}
|T_3| \leq \max_{E\in\CT_h}\{b_{1}(E)\}\|p_h-\Pi p_h\|_{0,\O}^2 \leq \max_{E\in\CT_h}\{b_{1}(E)\}\mathcal{C}h^{2r}.
\end{equation}
Finally, using the fact that $\lambda_{h}^{(i)} \rightarrow 0$ as $h\to0$, we obtain
\begin{equation*}
b_{h}(p_h,p_h) = \dfrac{\widehat{a}_h(p_h,p_h)}{\l_h^{(i)}} \geq \dfrac{\underline{C}\|p_h\|_{1,\O}^{2}}{\l_{h}^{(i)}} \geq \widetilde{C} > 0.
\end{equation*}
Therefore, gathering \eqref{eq:T1}, \eqref{eq:T2} and \eqref{eq:T3} and defining 
\begin{equation*}
\mathcal{K}:= \max\left\lbrace \mathcal{C}^{2}\max\{c^{2}\rho^{-1},1\}, C_{6}^{*}, \max_{E\in\CT_h}\{b_{1}(E)\}\mathcal{C} \right\rbrace,
\end{equation*}
we conclude the proof.
\end{proof}
\subsection{Error estimates in $\L^2$ norm}
In the present subsection we establish error estimates for eigenfunctions in $L^{2}$ norm. We begin this subsection with the following result, where a classical duality argument has been used.
\begin{lemma}\label{eq:cvlm2}
Let $f \in \mathcal{E}$ be such that $\widetilde{p} := Tf$ and $\widetilde{p}_{h} := \widehat{T}_{h}f$. Then, the following estimate  holds
\begin{equation*}
\|\widetilde{p}-\widetilde{p}_{h}\|_{0,\O} \leq \mathcal{J}h^{\widetilde{r}+s}\|f\|_{1,\O},
\end{equation*}
where $\mathcal{J}$ is a positive constant independent of $h$, and $\widetilde{r}$ is given by Lemma \ref{lmm:regularity}.
\end{lemma}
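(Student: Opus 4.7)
The plan is a standard Aubin--Nitsche duality argument, adapted to handle the two layers of non-conformity inherent to the stabilized VEM bilinear forms $\widehat{a}_h(\cdot,\cdot)$ and $b_h(\cdot,\cdot)$. Set $g:=\widetilde{p}-\widetilde{p}_h$ and introduce the dual problem: find $\varphi\in \H^1(\O)$ such that
\begin{equation*}
\widehat{a}(v,\varphi)=b(v,g)\qquad\forall v\in\H^1(\O).
\end{equation*}
Since $g\in\L^2(\O)\subset(\H^1(\O))'$, Lemma~\ref{lmm:regularity} applied to the dual problem gives $\varphi\in\H^{1+s}(\O)$ with $\|\varphi\|_{1+s,\O}\leq C\|g\|_{0,\O}$. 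Testing with $v=g$ yields $b(g,g)=\widehat{a}(g,\varphi)$, from which $\|g\|_{0,\O}^2\lesssim b(g,g)$.

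Next, I would use the identity $\widehat{a}(\widetilde{p},\varphi)=b(f,\varphi)$ (definition of $T$) together with the discrete equation $\widehat{a}_h(\widetilde{p}_h,\varphi_I)=b_h(P_hf,\varphi_I)$ applied to the interpolant $\varphi_I\in V_h$ (Lemma~\ref{eq:interpolant}) to obtain the Galerkin-type decomposition
\begin{equation*}
\widehat{a}(g,\varphi)=\widehat{a}(\widetilde{p}_h,\varphi_I-\varphi)+\bigl[\widehat{a}_h(\widetilde{p}_h,\varphi_I)-\widehat{a}(\widetilde{p}_h,\varphi_I)\bigr]+\bigl[b(f,\varphi)-b_h(P_hf,\varphi_I)\bigr].
\end{equation*}
Each bracket is then treated by inserting polynomial projections $\varphi_\pi,\widetilde{p}_\pi,f_\pi$ and using the consistency/stability bounds \eqref{eqrefgt1}--\eqref{eqrefgt4}, Lemma~\ref{eq:polyapprox}, Lemma~\ref{eq:interpolant} and the properties of $P_h$, exactly as in the proof of Lemma~\ref{lmm:conv_norm}.

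The key gain over the $\H^1$-estimate is that the factor arising from $\varphi$ is always $\H^{1+s}$-regular, contributing $h^s\|\varphi\|_{1+s,\O}\lesssim h^s\|g\|_{0,\O}$, while the factor arising from the primal data now profits from the improved regularity $\widetilde{p}\in\H^{1+\widetilde{r}}(\O)$ available because $f\in\mathcal{E}$ (part~2 of Lemma~\ref{lmm:regularity}), yielding a factor $h^{\widetilde{r}}\|f\|_{1,\O}$ via Lemma~\ref{lmm:conv_norm} (and its $\vertiii{\,\cdot\,}$-version \eqref{eq:triplel2} for the consistency terms). Multiplying the two contributions produces the desired rate $h^{\widetilde{r}+s}$.

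The main obstacle I anticipate is the careful bookkeeping of the consistency terms $\widehat{a}_h(\widetilde{p}_h,\varphi_I)-\widehat{a}(\widetilde{p}_h,\varphi_I)$ and $b(f,\varphi)-b_h(P_hf,\varphi_I)$: each must be split via $\pm\varphi_\pi,\pm\widetilde{p}_\pi,\pm f_\pi$ so that one factor sees the polynomial approximation rate $h^s$ on $\varphi$ and the other sees the enhanced rate $h^{\widetilde{r}}$ from the primal eigenfunction regularity, without accidentally losing a power of $h$ on the $P_h$-projection. Collecting all terms and applying $\|\varphi\|_{1+s,\O}\leq C\|g\|_{0,\O}$ on the right produces $\|g\|_{0,\O}^2\leq \mathcal{J}h^{\widetilde{r}+s}\|f\|_{1,\O}\|g\|_{0,\O}$, from which the conclusion follows after dividing by $\|g\|_{0,\O}$.
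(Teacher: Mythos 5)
Your overall strategy -- an Aubin--Nitsche duality argument with an auxiliary problem whose datum is $g:=\widetilde{p}-\widetilde{p}_h$, followed by splitting off the VEM consistency defects of $\widehat{a}_h$ and $b_h$ -- is exactly the route the paper takes. However, the specific decomposition you write down does not deliver the rate by the mechanism you claim. Your first term is $\widehat{a}(\widetilde{p}_h,\varphi_I-\varphi)$: here the slot carrying the ``primal'' factor contains the full discrete solution $\widetilde{p}_h$, whose $\H^1$-norm is only bounded, not small, so on its own this term is merely $O(h^{s})$ -- one power of $h$, not $h^{\widetilde{r}+s}$. Your closing claim that ``multiplying the two contributions produces the desired rate'' therefore fails for this term. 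Relatedly, your last bracket $b(f,\varphi)-b_h(P_hf,\varphi_I)$ mixes $\varphi$ and $\varphi_I$; treating it by ``inserting polynomial projections'' as in Lemma~\ref{lmm:conv_norm} leaves behind the piece $b(f,\varphi-\varphi_I)$, which again is not obviously of order $h^{\widetilde{r}+s}$. The two defects in fact cancel: since $\widehat{a}(\widetilde{p},w)=b(f,w)$ for all $w$, one has $b(f,\varphi-\varphi_I)=\widehat{a}(\widetilde{p},\varphi-\varphi_I)$, and combining this with your first term yields $\widehat{a}(\widetilde{p}-\widetilde{p}_h,\varphi-\varphi_I)$, a genuine product of two error factors. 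But you do not perform this recombination, and without it the argument as written does not close. The paper sidesteps the issue by arranging the decomposition from the start so that the leading term is $\widehat{a}(q-q_I,\widetilde{p}-\widetilde{p}_h)$ (interpolation error of the dual solution against the primal error) and so that the two consistency brackets carry $q_I$ in \emph{both} slots.

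Two smaller points. First, your bookkeeping assigns the dual solution only $\H^{1+s}$ regularity and asks the primal error to converge at the enhanced rate $h^{\widetilde{r}}$ ``via Lemma~\ref{lmm:conv_norm}''; that lemma as stated only gives $h^{s}\|f\|_{1,\O}$, so your assignment would require re-proving it with eigenfunction regularity. The paper does the opposite and cheaper thing: it takes the rate $h^{s}$ for the primal error directly from Lemma~\ref{lmm:conv_norm} and extracts the factor $h^{\widetilde{r}}$ from the interpolation error of the dual solution $q\in\H^{1+\widetilde{r}}(\O)$, whose datum is in $\L^2$. Second, you divide by $\|g\|_{0,\O}$ at the end; this is harmless but should be guarded by the trivial case $g=0$.
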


\begin{proof}
Let us consider the following auxiliarly problem: Find $q \in \H^1(\O)$ such that 
\begin{equation}\label{eq:auxprob}
\widehat{a}(q,v) = b(\widetilde{p}-\widetilde{p}_{h},v) \quad \forall v \in \H^1(\O).
\end{equation}
Observe that this problem is well-posed and there exists $\widetilde{r}$ as in Lemma \ref{lmm:regularity}, such that 
\begin{equation*}
|q|_{1+\widetilde{r},\O} \leq C\|\widetilde{p}-\widetilde{p}_{h}\|_{0,\O}.
\end{equation*}
Now, testing \eqref{eq:auxprob} with $v := \widetilde{p}-\widetilde{p}_{h}$, we obtain the following identity
\begin{equation*}
\begin{split}
\|\widetilde{p}-\widetilde{p}_{h}\|_{0,\O}^{2} &= b(\widetilde{p}-\widetilde{p}_{h},\widetilde{p}-\widetilde{p}_{h}) = \widehat{a}(q,\widetilde{p}-\widetilde{p}_{h}) \\
&= \widehat{a}(q-q_{I},\widetilde{p}-\widetilde{p}_{h}) + \widehat{a}(q_{I},\widetilde{p}-\widetilde{p}_{h}) \\
&= \underbrace{\widehat{a}(q-q_{I},\widetilde{p}-\widetilde{p}_{h})}_{B_{1}} + \underbrace{b(f,q_{I}) - b_{h}(P_hf,q_{I})}_{B_{2}} + \underbrace{\widehat{a}_{h}(q_{I},\widetilde{p}_{h}) - \widehat{a}(q_{I},\widetilde{p}_{h})}_{B_{3}}.
\end{split}
\end{equation*}
Then, we need to estimate the contributions on the right-hand side of the above equation. To estimate $B_{1}$, we invoke Lemma \ref{lmm:conv_norm} and Lemma \ref{eq:interpolant} in order to obtain
\begin{equation}\label{eq:B1}
|B_{1}| \leq c^{2}\rho^{-1}|\widetilde{p}-\widetilde{p}_{h}|_{1,\O}|q-q_{I}|_{1,\O} \leq  c^{2}\rho^{-1}\mathcal{C}h^{\widetilde{r}+s}\|f\|_{1,\O}|q|_{1+\widetilde{r},\O}.
\end{equation}
For $B_{2}$, using error estimates for $\Pi$ and triangle inequality, we obtain
\begin{multline}\label{eq:B2}
|B_{2}| = |b(P_hf,q_I) - b_h(P_hf,q_I)| 
\leq \max_{E\in\CT_h}\{b_{1}(E),1\}\rho^{-1}\|f - \Pi f\|_{0,\O}\|q_{I} - \Pi q_{I}\|_{0,\O} \\
\leq \max_{E\in\CT_h}\{b_{1}(E),1\}\rho^{-1}h^{s}\|f\|_{1,\O}\left(\|q-q_{I}\|_{0,\O} + \|q-\Pi q\|_{0,\O} + \|\Pi(q-q_{I})\|_{0,\O}\right) \\
\leq \max_{E\in\CT_h}\{b_{1}(E),1\}\rho^{-1}h^{\widetilde{r}+s}\|f\|_{1,\O}|q|_{1+\widetilde{r},\O}.
\end{multline}
Finally, for $B_{3}$ we invoke \eqref{eqrefgt1} and \eqref{eqrefgt3} in order to obtain 
\begin{multline}\label{eq:B3}
|B_{3}| \leq \sum_{E\in\CT_h} |\widehat{a}_{h}^{E}(\widetilde{p}_{h},q_I) - \widehat{a}^{E}(\widetilde{p}_{h},q_I)| \\ 
= \sum_{E\in\CT_h} |\widehat{a}_{h}^{E}(\widetilde{p}_{h} - p_\pi,q_I - q_\pi) - \widehat{a}^{E}(\widetilde{p}_{h} - p_\pi,q_I - q_\pi)| \\
\leq C_{7}^{*}\sum_{E\in\CT_h} \vertiii{q_I - q_\pi}_{E}\vertiii{\widetilde{p}_{h}-p_\pi}_{E}  + \|\widetilde{p}_{h}-p_{\pi}\|_{0,\O}\|q_\pi-q_{I}\|_{0,\O} \\
\leq C_{7}^{*}\left[\left(\vertiii{q-q_I} + \vertiii{q-q_\pi}\right)\left(\vertiii{\widetilde{p}-\widetilde{p}_{I}} + \vertiii{\widetilde{p}_{h} - \widetilde{p}_{I}} + \vertiii{\widetilde{p}-p_\pi}\right)\right. \\
\left. + \left(\|\widetilde{p}-\widetilde{p}_{I}\|_{0,\O} + \|\widetilde{p}_I - \widetilde{p}_{h}\|_{0,\O} + \|\widetilde{p} - p_\pi\|_{0,\O}\right)\left(\|q-q_I\|_{0,\O} + \|q-q_\pi\|_{0,\O}\right)\right].
\end{multline}
where $\displaystyle C_{7}^{*} := c^{2}\rho^{-1}\max_{E\in\CT_h}\{C_{2}(E),C_{4}(E), b_{1}(E),1\}$.
Hence, using \eqref{eq:triplel2}, we conclude that $|B_{3}| \leq \widehat{C}h^{\widetilde{r}+s}\|f\|_{1,\O}|q|_{1+\widetilde{r},\O}$,
where the constant $\widehat{C}$ is defined by
$$\widehat{C} := \max\{C_{7}\sqrt{C_{4}^{*}}\max\{c^{2}\rho^{-1},1\}, \max\{c^{2}\rho^{-1},1\}\}.$$
Therefore, combining \eqref{eq:B1}, \eqref{eq:B2} and \eqref{eq:B3}, together with the additional regularity for $q$, and setting $$\mathcal{J}:= \max\{c^{2}\rho^{-1}\mathcal{C},\max_{E\in\CT_h}\{b_{1}(E),1\}, \widehat{C}\},$$ we  obtain
\begin{equation*}
\|(T-\widehat{T}_h)f\|_{0,\O} = \|\widetilde{p}-\widetilde{p}_{h}\|_{0,\O} \leq \mathcal{J}h^{\widetilde{r}+s}\|f\|_{1,\O},
\end{equation*}
concluding the proof.
\end{proof}

Now, we introduce the solution operator on the space $\L^{2}(\O)$, given by
$$\widetilde{T} : \L^{2}(\O) \rightarrow \L^{2}(\O), \quad \widetilde{f} \mapsto \widetilde{T}\widetilde{f} := \widetilde{p},$$
where $\widetilde{p}$ is the (unique) solution of the associated source problem. It is easy to check that $\widetilde{T}$ is compact and self-adjoint with respect to $\widehat{a}(\cdot,\cdot)$. Moreover, the spectra of $T$ and $\widetilde{T}$ coincide. 

Now, we are in position to prove the convergence in the $\L^{2}$ norm for $\widehat{T}_{h}$ to $\widetilde{T}$ as $h \to 0$.
\begin{lemma}\label{eq:cvvnormL2}
The following estimate holds
\begin{equation*}
\|(\widetilde{T}-\widehat{T}_{h})\widetilde{f}\|_{0,\O} \leq \widetilde{\mathcal{J}}h^{s}\|\widetilde{f}\|_{0,\O} \quad \forall \widetilde{f} \in \L^{2}(\O),
\end{equation*}
where $s$ is given in Lemma \ref{lmm:regularity}.
\end{lemma}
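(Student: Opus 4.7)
The plan is to prove the stronger $H^1$-type bound $\|(\widetilde{T}-\widehat{T}_h)\widetilde{f}\|_{1,\Omega} \leq C h^s \|\widetilde{f}\|_{0,\Omega}$ and obtain the claimed $L^2$ estimate from the trivial embedding $\|\cdot\|_{0,\Omega} \leq \|\cdot\|_{1,\Omega}$. I set $\widetilde{p} := \widetilde{T}\widetilde{f}$ and $\widetilde{p}_h := \widehat{T}_h \widetilde{f}$; standard elliptic regularity for the Neumann Laplacian with $L^2$ right-hand side (extending Lemma \ref{lmm:regularity}) yields $\widetilde{p} \in H^{1+s}(\Omega)$ with $|\widetilde{p}|_{1+s,\Omega} \leq C \|\widetilde{f}\|_{0,\Omega}$, which is the key ingredient replacing the $H^1$-source regularity used before.

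The heart of the argument mimics the proof of Lemma \ref{lmm:conv_norm}, but with a source in $L^2$ rather than $H^1$. Setting $v_h := \widetilde{p}_I - \widetilde{p}_h$, the $V_h$-coercivity of $\widehat{a}_h(\cdot,\cdot)$, the consistency of $\widehat{a}_h$ on $\mathbb{P}_1$, together with the continuous and discrete source equations produce the decomposition
\begin{equation*}
\widehat{a}_h(v_h,v_h) = \underbrace{\widehat{a}_h(\widetilde{p}_I - p_\pi, v_h) - \widehat{a}(\widetilde{p} - p_\pi, v_h)}_{(\mathrm{I})} + \underbrace{b(P_h\widetilde{f}, v_h) - b_h(P_h\widetilde{f}, v_h)}_{(\mathrm{II})}.
\end{equation*}
Term $(\mathrm{I})$ is handled exactly as in Lemma \ref{lmm:conv_norm}, giving $|(\mathrm{I})| \leq C h^s \|\widetilde{f}\|_{0,\Omega}\,(\vertiii{v_h}^2 + \|v_h\|_{0,\Omega}^2)^{1/2}$ after invoking Lemmas \ref{eq:polyapprox} and \ref{eq:interpolant} together with the $H^{1+s}$-regularity of $\widetilde{p}$.

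The main obstacle is term $(\mathrm{II})$: the polynomial-approximation estimate $\|f - f_\pi\|_{0,\Omega} \leq C h^s \|f\|_{1,\Omega}$ used in Lemma \ref{lmm:conv_norm} for the analogous term is unavailable when the source is only $L^2$. I would circumvent this by exploiting that both $P_h\widetilde{f}$ and $v_h$ live in $V_h$, and using the exact identity
\begin{equation*}
b^E(u,w) - b_h^E(u,w) = b^E(u - \Pi^E u, w - \Pi^E w) - S_0^E(u - \Pi^E u, w - \Pi^E w), \qquad u,w\in V_h^E,
\end{equation*}
which follows from the $L^2$-orthogonality $u - \Pi^E u \perp \mathbb{P}_1(E)$. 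Cauchy--Schwarz combined with the Poincar\'e-type bound $\|w - \Pi^E w\|_{0,E} \leq C h_E |w|_{1,E}$ on star-shaped elements (valid under assumption {\bf A1} and insensitive to small edges) and the $L^2$-stability $\|P_h\widetilde{f}\|_{0,\Omega} \leq \|\widetilde{f}\|_{0,\Omega}$ then yield $|(\mathrm{II})| \leq C h \|\widetilde{f}\|_{0,\Omega} |v_h|_{1,h}$, which is bounded by $C h^s \|\widetilde{f}\|_{0,\Omega}\,(\vertiii{v_h}^2 + \|v_h\|_{0,\Omega}^2)^{1/2}$ via \eqref{eq:stabbb} and the fact that $s \leq 1$.

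Combining the two estimates, dividing by $(\vertiii{v_h}^2 + \|v_h\|_{0,\Omega}^2)^{1/2}$, and appealing once more to \eqref{eq:stabbb} give $\|v_h\|_{1,\Omega} \leq C h^s \|\widetilde{f}\|_{0,\Omega}$. The triangle inequality with the interpolation estimate of Lemma \ref{eq:interpolant} closes the $H^1$ bound, and the claimed $L^2$ estimate with an appropriate $\widetilde{\mathcal{J}}$ follows at once.
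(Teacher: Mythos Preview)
Your proposal is correct and follows essentially the same route as the paper: both reuse the proof of Lemma~\ref{lmm:conv_norm} verbatim for term $(\mathrm{I})$ and both treat term $(\mathrm{II})$ by shifting the approximation error from the (merely $L^2$) source onto $v_h$, then applying a Poincar\'e-type bound $\|v_h-\Pi v_h\|_{0,E}\le Ch_E|v_h|_{1,E}$ together with \eqref{eq:stabbb}. The only cosmetic difference is that the paper invokes consistency in the second argument, writing $b^E(P_hf,v_h)-b_h^E(P_hf,v_h)=b^E(P_hf,v_h-(v_h)_\pi)-b_h^E(P_hf,v_h-(v_h)_\pi)$, whereas you use the symmetric identity $b^E(u,w)-b_h^E(u,w)=b^E(u-\Pi^Eu,w-\Pi^Ew)-S_0^E(u-\Pi^Eu,w-\Pi^Ew)$; both lead to the same bound.
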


\begin{proof}
The proof follows the same arguments that those in the proof of Lemma \ref{lmm:conv_norm}, but the term $\mathrm{(II)}$ must be estimated by follows:
\begin{multline*}
|\mathrm{(II)}| \leq \sum_{E\in\CT_h} |b^{E}(P_hf,v_h) - b_h^E(P_hf,v_h)| \\
= \sum_{E\in\CT_h} |b^E(P_hf,v_h - (v_h)_\pi) - b_h^E(P_hf,v_h - (v_h)_\pi)| \\ 
\leq \max_{E\in\CT_h}\{b_{1}(E),1\}\rho^{-1}\|P_hf\|_{0,\O}\|v_h - (v_h)_\pi\|_{0,\O} \\
 \leq \max_{E\in\CT_h}\{b_{1}(E),1\}\rho^{-1}h^{s}\|f\|_{0,\O}\|v_h\|_{1,\O},
\end{multline*}
where, invoking \eqref{eqrefgt3} we obtain $$\displaystyle \|v_h\|_{1,\O} \leq c\rho^{-1/2}\max_{E\in\CT_h}\{C_{4}(E),1\}\left(\vertiii{v_h}^{2} + \|v_h\|_{0,\O}^{2}\right)^{1/2}.$$ This concludes the proof with
\begin{equation*}
\displaystyle \widetilde{\mathcal{J}} := \max\left\lbrace c\rho^{-1/2}\max_{E\in\CT_h}\{C_{4}(E),1\},C_3^*,\rho^{-1}b_{1}(E),\rho^{-1} \right\rbrace.
\end{equation*}
\end{proof}

As a consequence of the previous Lemma, a spectral convergence result analogous to Theorem \ref{thm:errors1} holds for $\widetilde{T}$ and $\widehat{T}_{h}$. This allows us to obtain the following result.

\begin{theorem}
Let $p_{h}$ be an eigenfunction of $\widehat{T}_{h}$ associated to the eigenvalue $\mu_{h}^{(i)}$, $1 \leq i \leq m$ with $\|p_{h}\|_{0,\O} = 1$. Then, there exists an eigenfunction $p$ of $\widetilde{T}$ associated to the eigenvalue $\mu$ such that 
\begin{equation*}
\|p-p_{h}\|_{0,\O} \leq \mathcal{J}h^{\widetilde{r}+s}\|f\|_{1,\O},
\end{equation*}
where $\mathcal{J}$ is a positive constant independent of $h$.
\end{theorem}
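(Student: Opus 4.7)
The plan is to combine the $\L^2$-norm convergence of $\widehat{T}_{h}$ to $\widetilde{T}$ proved in Lemma \ref{eq:cvvnormL2} with the sharper duality estimate of Lemma \ref{eq:cvlm2} restricted to the invariant subspace $\mathcal{E}$, and then to invoke the abstract spectral theory of Babu\v{s}ka--Osborn (\cite{BO}).

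First, I would note that Lemma \ref{eq:cvvnormL2} provides the convergence of $\widehat{T}_{h}$ to $\widetilde{T}$ in the $\L^{2}(\O)$ operator norm. Since both $\widetilde{T}$ and $\widehat{T}_{h}$ are compact and self-adjoint with respect to $\widehat{a}(\cdot,\cdot)$, and since the spectra of $T$, $\widetilde{T}$, $T_{h}$, and $\widehat{T}_{h}$ all coincide, the Babu\v{s}ka--Osborn framework (\cite[Theorem 7.3]{BO}) ensures that for every discrete eigenfunction $p_{h} \in \mathcal{E}_{h}$ normalized so that $\|p_{h}\|_{0,\O}=1$, there exists an eigenfunction $p \in \mathcal{E}$ of $\widetilde{T}$ associated with the same limit eigenvalue $\mu$ such that
\begin{equation*}
\|p - p_{h}\|_{0,\O} \leq C\,\bigl\|(\widetilde{T} - \widehat{T}_{h})\big|_{\mathcal{E}}\bigr\|_{\L^{2}(\O)\to \L^{2}(\O)},
\end{equation*}
where $C$ is independent of $h$.

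Second, I would exploit the fact that every $f \in \mathcal{E}$ is an eigenfunction of $T$ and therefore enjoys the extra regularity $f \in \H^{1+\widetilde{r}}(\O)$ from Lemma \ref{lmm:regularity}. Since $\mathcal{E}$ is finite dimensional and all norms are equivalent on it, we have $\|f\|_{1,\O} \leq \widehat{C}\|f\|_{0,\O}$ for all $f\in\mathcal{E}$, with a constant $\widehat{C}$ depending on $\mu$ but not on $h$. This is the bridge that converts the $\|f\|_{1,\O}$-dependent estimate of Lemma \ref{eq:cvlm2} into a bound of the form
\begin{equation*}
\|(\widetilde{T} - \widehat{T}_{h})f\|_{0,\O} \leq \mathcal{J}\,h^{\widetilde{r}+s}\,\|f\|_{1,\O}\qquad \forall f\in\mathcal{E},
\end{equation*}
which can be read as a bound on the restricted operator norm on $\mathcal{E}$.

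Third, combining the two previous steps delivers the claimed estimate with the sharper exponent $h^{\widetilde{r}+s}$. The main technical obstacle is not the duality step itself (Lemma \ref{eq:cvlm2} does the heavy lifting) but rather the careful bookkeeping needed to apply Babu\v{s}ka--Osborn in the $\L^{2}$ setting: one must verify that the operators $\widetilde{T}$ and $\widehat{T}_{h}$ fit the abstract hypotheses, that the invariant subspaces $\mathcal{E}_{h}$ converge to $\mathcal{E}$ in the gap $\widehat{\delta}$ (which follows from Lemma \ref{eq:cvvnormL2} applied on a neighborhood of $\mu$), and that the restriction of the error operator to $\mathcal{E}$ is the quantity controlling the eigenfunction error. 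Once these abstract pieces are in place, substituting the bound from Lemma \ref{eq:cvlm2} and setting the constant $\mathcal{J}$ accordingly yields the conclusion.
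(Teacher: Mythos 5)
Your proposal is correct and follows essentially the same route as the paper: apply the Babu\v{s}ka--Osborn theory to $\widetilde{T}$ and $\widehat{T}_{h}$ using the $\L^2$ operator-norm convergence of Lemma \ref{eq:cvvnormL2}, bound the eigenfunction error by the restriction of $\widetilde{T}-\widehat{T}_{h}$ to the eigenspace, and then feed in the duality estimate of Lemma \ref{eq:cvlm2} together with the bound $\|f\|_{1,\O}\leq C\|f\|_{0,\O}$ on $\mathcal{E}$. The only cosmetic difference is that you justify this last bound by norm equivalence on the finite-dimensional eigenspace, whereas the paper uses the eigenfunction identity $f=\mu^{-1}Tf$ and the boundedness of $T$; both are valid.
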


\begin{proof}
Observe that invoking Lemma \ref{eq:cvvnormL2} and \cite[Theorem 7.1]{BO}, we have spectral convergence of $\widehat{T}_{h}$ to $\widetilde{T}$. On the other hand, due to the relation between the eigenfunctions of $T$ and $T_{h}$ with those of $\widetilde{T}$ and $\widehat{T}_{h}$ respectively, we have $p_{h} \in \widetilde{\mathcal{E}}_{h}$ and there exists $p \in \mathcal{E}$ such that
\begin{equation}\label{eq12345}
\|p-p_{h}\|_{0,\O} \leq C\underset{\widetilde{f}\in\widetilde{\mathcal{E}} : \|\widetilde{f}\|_{0,\O}=1}{\sup} \|(\widetilde{T}-\widehat{T}_{h})\widetilde{f}\|_{0,\O}.
\end{equation}
Then, invoking Lemma \ref{eq:cvlm2}, for every $\widetilde{f} \in \widetilde{\mathcal{E}}$, if $f \in \mathcal{E}$ is such that $\widetilde{f} = f$ then
\begin{equation*}
\|(\widetilde{T}-\widehat{T}_{h})\widetilde{f}\|_{0,\O} = \|(T-\widehat{T}_{h})f\|_{0,\O} \leq \widetilde{\mathcal{J}}h^{\widetilde{r}+s}\|f\|_{1,\O}.
\end{equation*}
Finally, since $f \in \mathcal{E}$, we have that $\|f\|_{1,\O} = \mu^{-1}\|Tf\|_{1,\O} \leq C\|f\|_{0,\O}$, and combining it with \eqref{eq12345}, we conclude the proof.
\end{proof}


\section{Numerical experiments}
\label{sec:numerics}
Now we present a number of numerical tests to illustrate the performance of the proposed method. All the results have been obtained 
with a Matlab code. Since we are interested on the versatility of the method, we focus our tests for two type of domains: convex and non-convex. It is well known that for convex domains the eigenfunctions are sufficiently smooth compared with the ones associated to non-convex domains, which is reflected on the convergence order for the eigenvalues This must be captured with our method. Hence, the convergence orders and extrapolated values for the frequencies are obtained by means of a standard least-square fitting of the form 
\begin{equation}
\label{eq:least_square}
\omega_{hi} \approx \omega_{i} + C_{i}h^{\xi_{i}},
\end{equation}
where $\xi_{i}$ represents the computed order of convergence. Through all this section,  $\mathrm{N}$  represents the mesh refinement which is considered as the number of polygons on the bottom of the domain. 

For the construction of meshes allowing for small edges we recall the following procedure introduced in \cite{danilo_eigen}: 
\begin{enumerate}
\item[\textbf{Step 1}]  For any polygon $E\in\CT_h$,  we add  a hanging node on each edge of every polygon. This hanging node, denoted by $x_{HG}$, is constructed by using the following convex combination:
\begin{equation*}
x_{HG} := (1-t)x_{V}^{1} + tx_{V}^{2}, \quad t := \dfrac{\text{dist}(x_{V}^{1},x_{V}^{2})}{M},
\end{equation*}
where $x_{V}^{1}$ and $x_{V}^{2}$ are the vertices of the corresponding edge for which the hanging node has been added and $M > 0$.\\
\item[\textbf{Step 2}] The hanging node on \textbf{Step 1} now  is displaced along the edge with respect to a vertex of the polygon using the parameter $M > 0$, in order to make this hanging node collapse with such vertex in each refinement. More precisely, if $x_{HG}$ is close to $x_{V}^{1}$, the distance between $x_{V}^{1}$ and $x_{HG}$ is 
\begin{equation*}
\text{dist}(x_{HG},x_{V}^{1}) = \dfrac{\text{dist}(x_{V}^{1},x_{V}^{2})^{2}}{M}.
\end{equation*}
\end{enumerate}
Hence, the idea is taking $M > 0$ increasing in each mesh refinement, in order to obtain a hanging node that collapse with one of the vertices, and therefore, small edges. 

First, we define the ratio as $\text{Ratio} = h_{m}(E)/h_{E}$, where $h_{E}$ is the diameter of $E$ and $h_{m}(E)$ is the shortest edge of $E$. In Figure \ref{fig:meshesx} we present some polygonal meshes considered for the numerical experiments.
\begin{figure}[H]
	\begin{center}
			\centering\includegraphics[height=4.4cm, width=4.2cm]{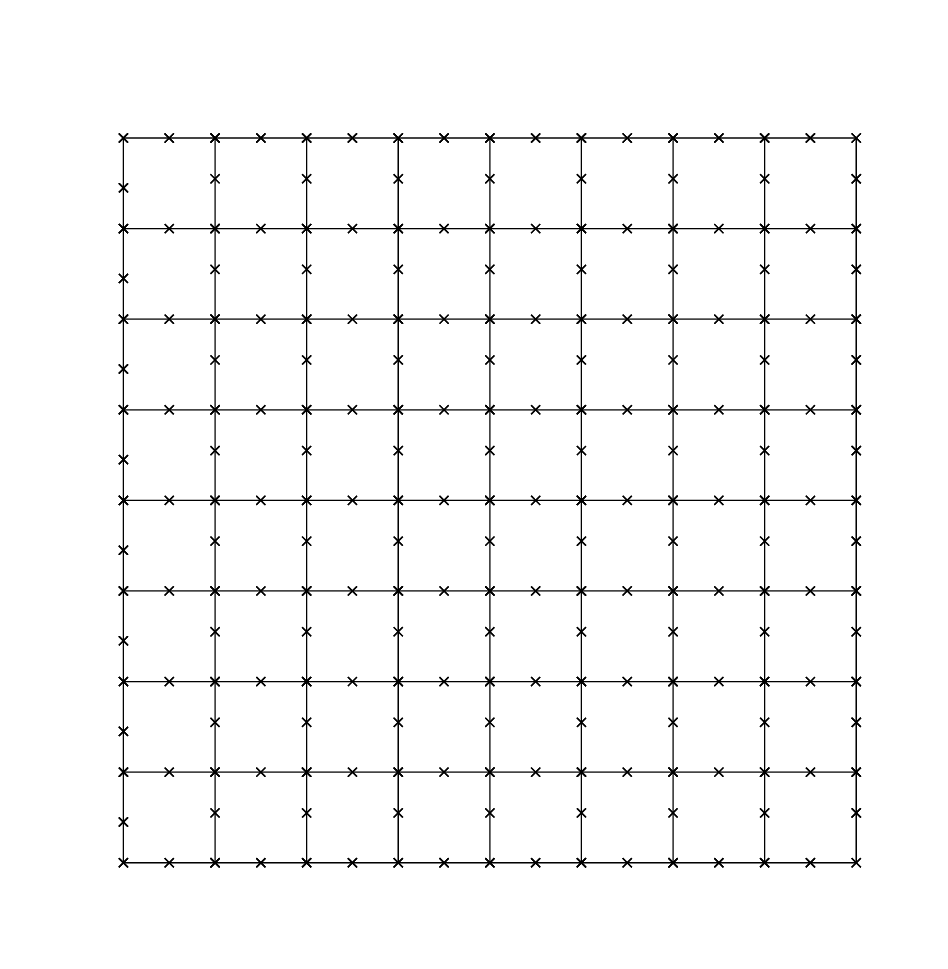}
			\centering\includegraphics[height=4.4cm, width=4.2cm]{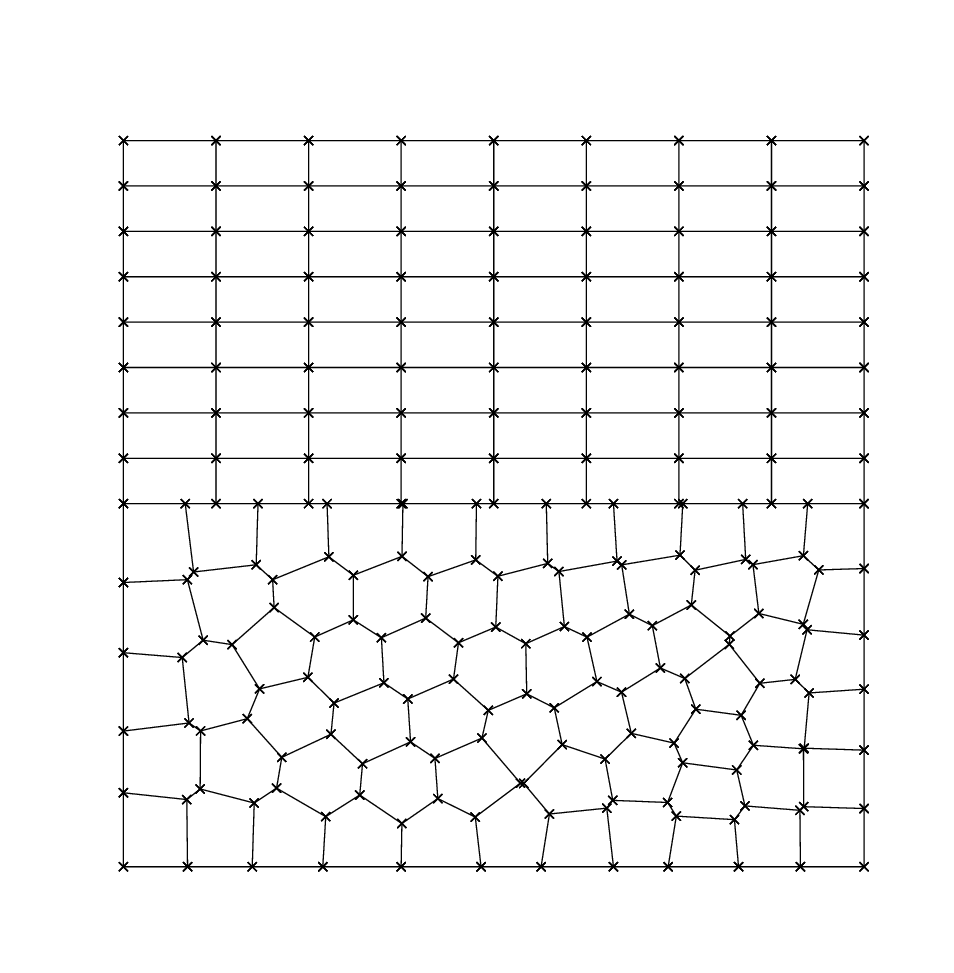}
			\centering\includegraphics[height=4.4cm, width=4.2cm]{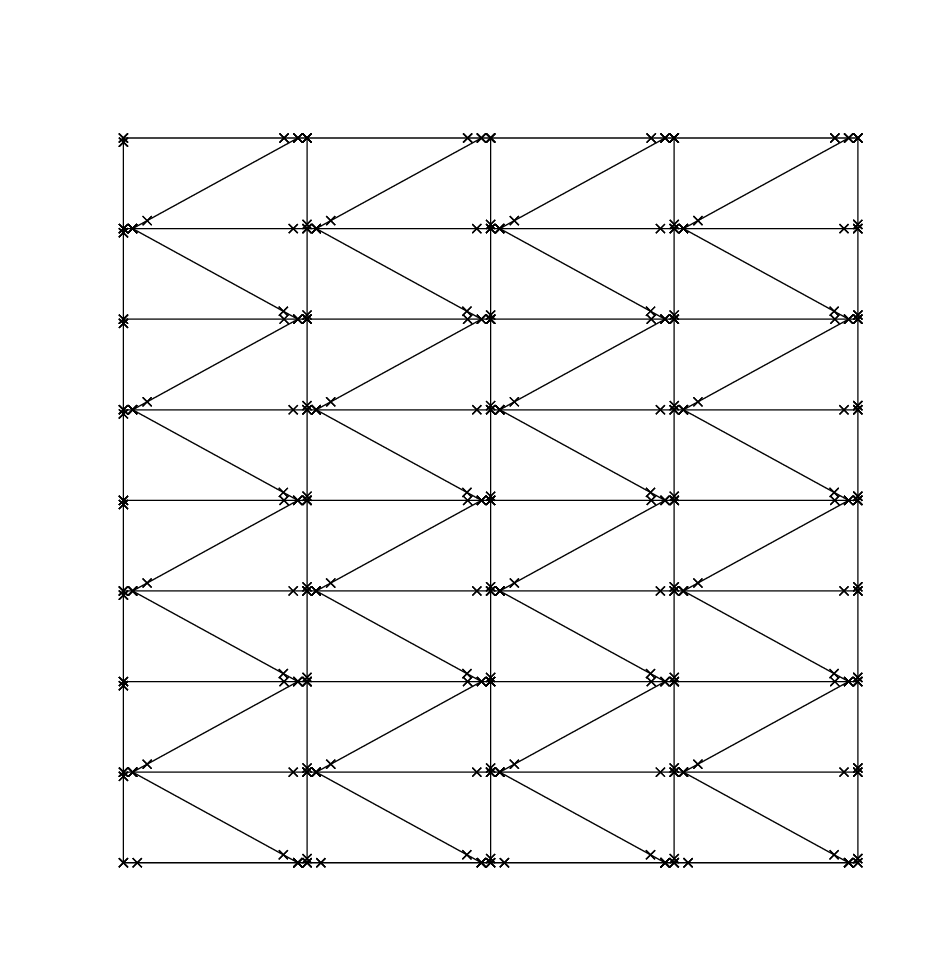}
		\caption{Sample of meshes. Left: $\mathcal{T}_{h}^{1}$ ($\mathrm{N}$ = 8); center: $\mathcal{T}_{h}^{2}$ ($\mathrm{N}$ = 11); right: $\mathcal{T}_{h}^{3}$ ($\mathrm{N}$ = 8).
}
		\label{fig:meshesx}
	\end{center}
\end{figure}

Let us mention that the meshes $\CT_{h}^{1}$ and $\CT_h^{3}$ was constructed in the process described previously, whereas mesh $\CT_h^{2}$ has not hanging nodes. However, this mesh is constructed with small edges.

\subsection{Test 1: Rectangular  acoustic cavity}
For this test, the computational domain is a rectangle of the form $\O:=(0,a)\times (0,b)$. For this domain, if we consider the physical parameters equal to one ($\rho=c=1$), the exact eigenvalues and eigenfunctions are know and are of the form 
\begin{align*}
\lambda_{nm}&:=\displaystyle\pi^2\left(\left(\frac{n}{a} \right)^2+\left(\frac{m}{b} \right)^2 \right),\quad n,m=0,1,2,\ldots, n+m\neq 0\\
\\
\bu_{nm}(x,y)&:=\begin{pmatrix}\displaystyle \frac{n}{a}\sin\left(\frac{n\pi x}{a}\right)\cos\left(\frac{m\pi y}{b}\right)\\
\\
\displaystyle \frac{m}{b}\cos\left(\frac{n\pi x}{a}\right)\sin\left(\frac{m\pi y}{b}\right)\end{pmatrix},
\end{align*}
where $\bu_{nm}(x,y)$ corresponds to the displacement of the fluid which can be computed by the relation $\displaystyle\frac{\nabla p_{nm}}{\lambda_{nm}}=\bu_{nm}$ that holds for the acoustic problem. Moreover we have considered the theoretical stabilization term, that is 
\begin{equation}\label{stabspec}
S(p_{h},q_{h}) = \displaystyle{\sum_{E \in \mathcal{T}_{h}} S^{E}(p_{h},q_{h})}, \quad S^{E}(p_{h},q_{h}) = \sigma_{E}h_{E}\displaystyle{\int_{E} \partial_{s}p_{h}\partial_{s}q_{h}},
\end{equation}
where in this case, the stabilization parameter has been taken as $\sigma_{E} = 1$, whereas the constanst $a$ and $b$ has been taken as $a=1$ and $b=1.1$. 

In Tables \ref{tabla1}, \ref{tabla2} and \ref{tabla3} we report approximated values of each one of the frequencies $\omega_{i} = \sqrt{\lambda_{i}-1}$, $i = 1,\ldots,5$, their respective orders of convergence and extrapolated frequencies for different meshes presented in Figure \ref{fig:meshesx}.
Also, in each table, in the row "Ratio" we report the measure of the on each refinement, in order to present the presence of real small edges on the meshes.
\begin{table}[H]
\caption{Five lowest approximated frequencies, orders of convergence, extrapolated frequencies and ratios, computed with $\mathcal{T}_{h}^{1}$ and the stabilization term defined in \eqref{stabspec}.}
\label{tabla1}
\begin{center}
\resizebox{13cm}{!}{
\begin{tabular}{|c|c|c|c|c|c|c|c|} \hline
$\omega_{hi}$ & N = 32 & N = 64 & N = 128 & N = 256 & Order & Exact. \\ \hline 
 $\omega_{h1}$ & 0.82710 & 0.82661 & 0.82649 & 0.82646 & 1.99 & 0.82645 \\
$\omega_{h2}$ & 1.00079 & 1.00020 & 1.00005 & 1.00001 & 1.98 & 1.00000 \\
$\omega_{h3}$ & 1.83520 & 1.82864 & 1.82699 & 1.82658 & 2.00 & 1.82645 \\
$\omega_{h4}$ & 3.31640 & 3.30844 & 3.30645 & 3.30595 & 2.00 & 3.30579 \\
$\omega_{h5}$ & 4.01300 & 4.00321 & 4.00080 & 4.00020 & 2.02 & 4.00001 \\
\hline
Ratio & 7.1638e-03 &  1.7794e-03 &  4.4413e-04 &  1.1099e-04 \\ \cline{1-5}
\end{tabular}}
\end{center}

\end{table}

\begin{table}[H]
\caption{Five lowest approximated frequencies, orders of convergence, extrapolated frequencies and ratios, computed with $\mathcal{T}_{h}^{2}$ and the stabilization term defined in \eqref{stabspec}.}
\label{tabla2}
\begin{center}
\resizebox{13cm}{!}{
\begin{tabular}{|c|c|c|c|c|c|c|c|} \hline
$\omega_{hi}$ & N = 11 & N = 20 & N = 39 & N = 88 & Order & Exact. \\ \hline 
 $\omega_{h1}$ & 0.82796 & 0.82685 & 0.82655 & 0.82647 & 1.89 & 0.82645 \\
$\omega_{h2}$ & 1.00287 & 1.00074 & 1.00018 & 1.00005 & 1.97 & 1.00002 \\
$\omega_{h3}$ & 1.84173 & 1.83016 & 1.82737 & 1.82668 & 2.07 & 1.82658 \\
$\omega_{h4}$ & 3.32838 & 3.31203 & 3.30739 & 3.30619 & 1.86 & 3.30587 \\
$\omega_{h5}$ & 4.04570 & 4.01182 & 4.00294 & 4.00074 & 1.97 & 4.00028 \\
\hline
Ratio & 5.5513e-03 & 5.0032e-03 & 1.1050e-03 & 3.3969e-04 \\ \cline{1-5}
\end{tabular}}
\end{center}
\end{table}

\begin{table}[H]
\caption{Five lowest approximated frequencies, orders of convergence, extrapolated frequencies and ratios, computed with $\mathcal{T}_{h}^{3}$ and the stabilization term defined in \eqref{stabspec}.}
\label{tabla3}
\begin{center}
\resizebox{13cm}{!}{
\begin{tabular}{|c|c|c|c|c|c|c|c|} \hline
$\omega_{hi}$ & N = 32 & N = 64 & N = 128 & N = 256 & Order & Exact. \\ \hline 
 $\omega_{h1}$ & 0.82709 & 0.82661 & 0.82649 & 0.82646 & 1.97 & 0.82645 \\
$\omega_{h2}$ & 1.00292 & 1.00074 & 1.00018 & 1.00005 & 1.97 & 1.00000 \\
$\omega_{h3}$ & 1.83576 & 1.82884 & 1.82705 & 1.82660 & 1.96 & 1.82644 \\
$\omega_{h4}$ & 3.31611 & 3.30841 & 3.30644 & 3.30595 & 1.97 & 3.30577 \\
$\omega_{h5}$ & 4.04669 & 4.01188 & 4.00301 & 4.00075 & 1.97 & 3.99996 \\
\hline
Ratio & 3.7655e-07  & 9.2970e-08  & 2.3171e-08 &  5.7882e-09 \\ \cline{1-5}
\end{tabular}}
\end{center}
\end{table}

From Tables \ref{tabla1}, \ref{tabla2} and \ref{tabla3} we observe that our method is sharp on the approximation of the frequencies. We confirm this fact by comparing our results with the exact ones presented in each column "Exact". Moreover, the reported Ratios are smaller when the meshes are refined, confirming the presence of small edges on the meshes. These results are similar for each of the meshes. 

Let us remark the following: for the theoretical analysis we have used the stabilization \eqref{stabspec} that depend on the tangential derivatives. This is a theoretical 
argument for the small edges treatment of the theory. However, from the computational point of view, we are free to consider any stabilization for the method. This motivates the analysis of the robustness of the VEM with small edges using other stabilizations. With this goal in mind, let us compare the previous results with the following
where we implement  the following stabilization term
\begin{equation}\label{classicspec}
S(p_{h},q_{h}) = \displaystyle{\sum_{E \in \mathcal{T}_{h}} S^{E}(p_{h},q_{h})}, \quad S^{E}(p_{h},q_{h}) = \sigma_{E}\displaystyle{\sum_{i = 1}^{N_{E}} p_{h}(V_{i}) q_{h}(V_{i})},
\end{equation}
where $V_{i}$ represent each vertex of the polygon $E$, and $N_{E}$ represent the number of vertices of $E$.  Again, we consider $\sigma_{E}=1$ in order to compare the obtained results with those in the previous tests. In Tables \ref{tabla4}, \ref{tabla5} \ref{tabla6} we report approximated values of each one of the frequencies $\omega_{i} = \sqrt{\lambda_{i}-1}$, $i = 1,\ldots,5$, their respective orders of convergence, and extrapolated frequencies for the meshes presented in Figure \ref{fig:meshesx}.

\begin{table}[H]
\caption{Five lowest approximated frequencies, orders of convergence, extrapolated frequencies and ratios, computed with $\mathcal{T}_{h}^{1}$ and the stabilization term defined in \eqref{classicspec}.}
\label{tabla4}
\begin{center}
\resizebox{13cm}{!}{
\begin{tabular}{|c|c|c|c|c|c|c|c|} \hline
$\omega_{hi}$ & N = 32 & N = 64 & N = 128 & N = 256 & Order & Exact \\ \hline 
 $\omega_{h1}$ & 0.82710 & 0.82659 & 0.82649 & 0.82647 & 2.21 & 0.82645 \\
  $\omega_{h2}$ & 1.00078 & 1.00017 & 1.00005 & 1.00001 & 2.25 & 1.00001 \\
  $\omega_{h3}$ & 1.83527 & 1.82866 & 1.82700 & 1.82658 & 1.99 & 1.82644 \\
 $\omega_{h4}$ & 3.31656 & 3.30853 & 3.30645 & 3.30595 & 1.96 & 3.30575 \\
 $\omega_{h5}$ & 4.01356 & 4.00320 & 4.00080 & 4.00020 & 2.10 & 4.00005 \\
\hline
Ratio & 7.1638e-03 &  1.7794e-03 &  4.4413e-04 &  1.1099e-04 \\ \cline{1-5}
\end{tabular}}
\end{center}
\end{table}

\begin{table}[H]
\caption{Five lowest approximated frequencies, orders of convergence, extrapolated frequencies and ratios, computed with $\mathcal{T}_{h}^{2}$ and the stabilization term defined in \eqref{classicspec}.}
\label{tabla5}
\begin{center}
\resizebox{13cm}{!}{
\begin{tabular}{|c|c|c|c|c|c|c|c|} \hline
$\omega_{hi}$ & N = 11 & N = 20 & N = 39 & N = 88 & Order & Exact. \\ \hline 
 $\omega_{h1}$ & 0.82721 & 0.82663 & 0.82649 & 0.82646 & 2.09 & 0.82645 \\
 $\omega_{h2}$ & 1.00205 & 1.00051 & 1.00013 & 1.00003 & 2.03 & 1.00002 \\
 $\omega_{h3}$ & 1.83098 & 1.82757 & 1.82672 & 1.82651 & 2.04 & 1.82648 \\
 $\omega_{h4}$ & 3.31806 & 3.30869 & 3.30651 & 3.30596 & 2.11 & 3.30590 \\
 $\omega_{h5}$ & 4.03224 & 4.00816 & 4.00203 & 4.00050 & 2.00 & 4.00021 \\
\hline
Ratio & 5.5513e-03 & 5.0032e-03 & 1.1050e-03 & 3.3969e-04 \\ \cline{1-5}
\end{tabular}}
\end{center}
\end{table}

\begin{table}[H]
\caption{Five lowest approximated frequencies, orders of convergence, extrapolated frequencies and ratios, computed with $\mathcal{T}_{h}^{3}$ and the stabilization term defined in \eqref{classicspec}.}
\label{tabla6}
\begin{center}
\resizebox{13cm}{!}{
\begin{tabular}{|c|c|c|c|c|c|c|c|} \hline
$\omega_{hi}$ & N = 32 & N = 64 & N = 128 & N = 256 & Order & Exact. \\ \hline 
 $\omega_{h1}$ & 0.82695 & 0.82657 & 0.82647 & 0.82645 & 2.03 & 0.82645 \\
$\omega_{h2}$ & 1.00209 & 1.00054 & 1.00013 & 1.00003 & 1.96 & 1.00000 \\
$\omega_{h3}$ & 1.83278 & 1.82806 & 1.82688 & 1.82655 & 1.97 & 1.82645 \\
$\omega_{h4}$ & 3.31355 & 3.30771 & 3.30623 & 3.30591 & 2.02 & 3.30578 \\
$\omega_{h5}$ & 4.03362 & 4.00858 & 4.00201 & 4.00055 & 1.97 & 3.99992 \\
\hline
Ratio & 3.7655e-07  & 9.2970e-08  & 2.3171e-08 &  5.7882e-09 \\ \cline{1-5}
\end{tabular}}
\end{center}
\end{table}
Note that there are not significant  differences when the theoretical stabilization term \eqref{stabspec} is changed to \eqref{classicspec}. The results presented in Tables \ref{tabla4}, \ref{tabla5} and \ref{tabla6} are very similar to those presented in Tables \ref{tabla1}, \ref{tabla2} and \ref{tabla3}, whereas the orders of convergence are perfectly attained.  Finally, in Figure \ref{fig:eigen1} we present the plots of the first, third and fifth eigenfunctions associated to the pressure, and their corresponding displacement fields. 

\begin{figure}[H]
	\begin{center}
			\centering\includegraphics[height=4.5cm, width=4.2cm]{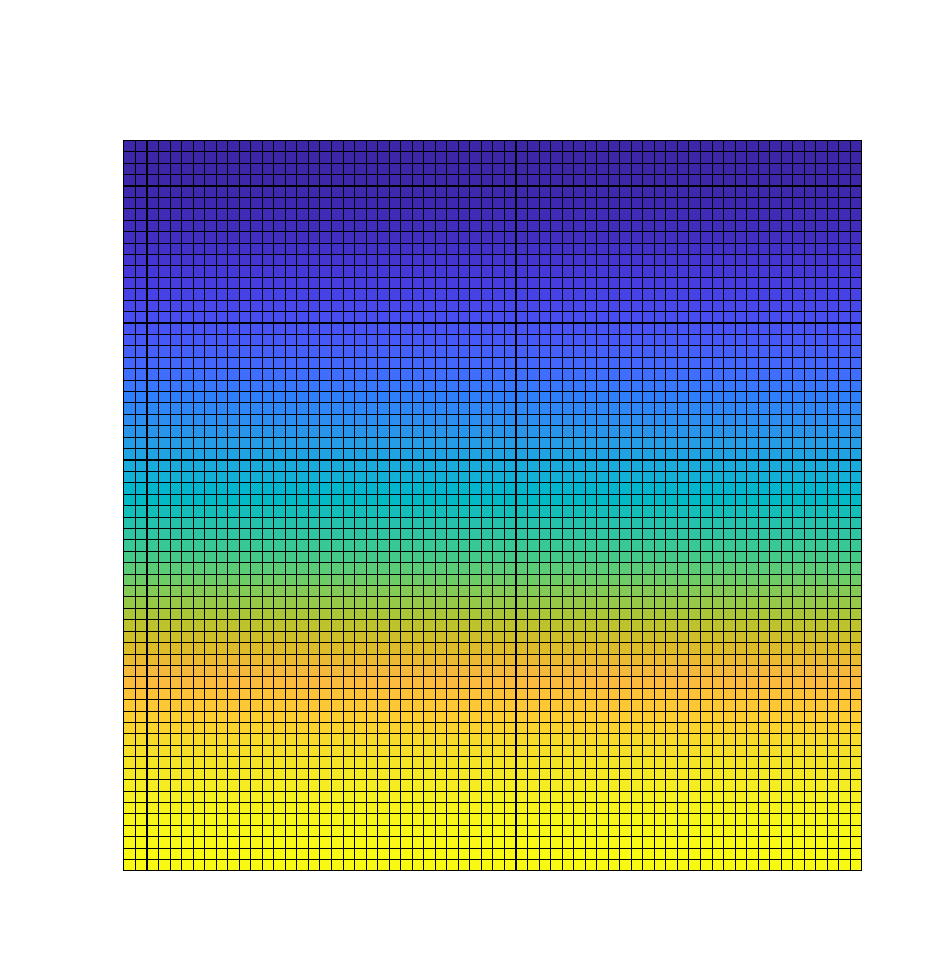}
			\centering\includegraphics[height=4.5cm, width=4.2cm]{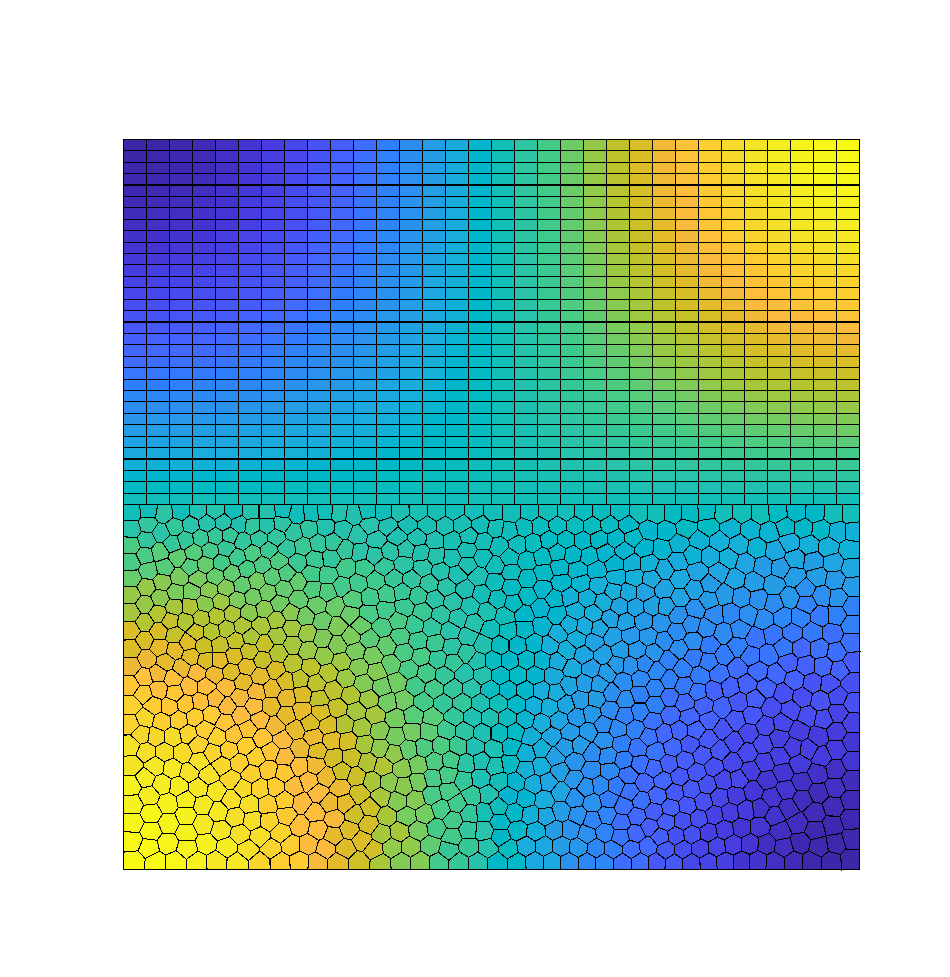}
			\centering\includegraphics[height=4.5cm, width=4.2cm]{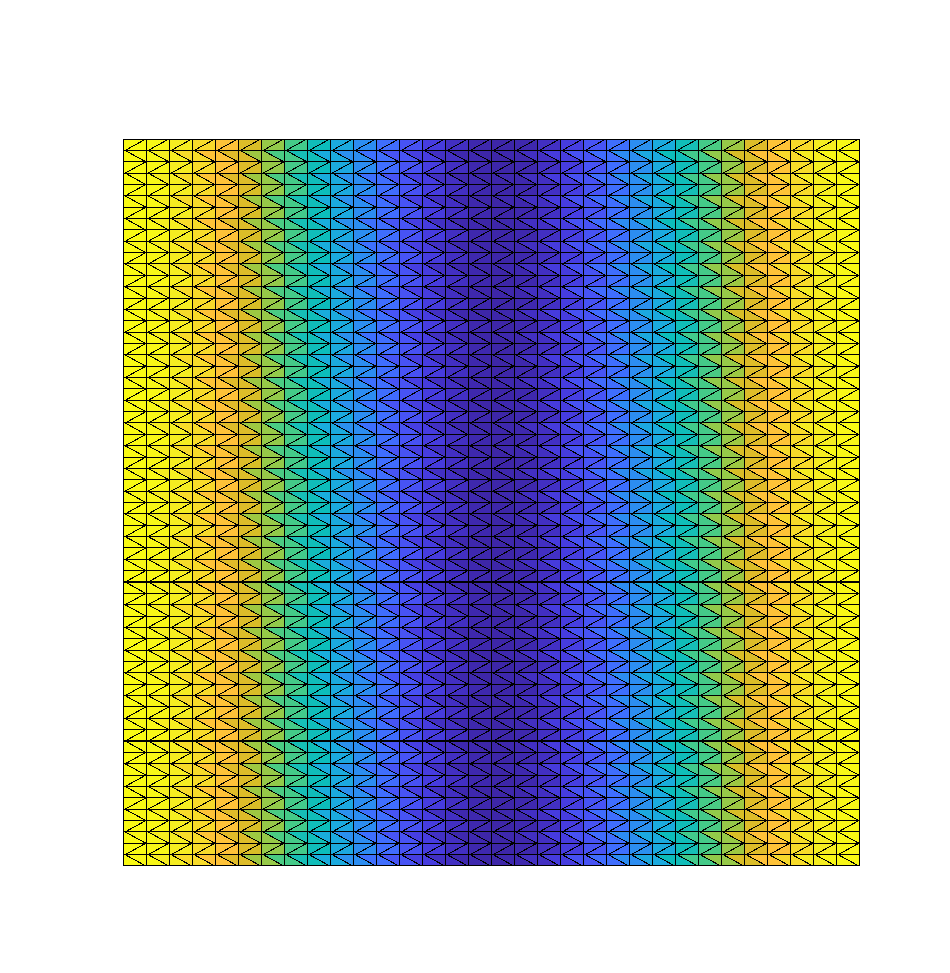} \\
		    \centering\includegraphics[height=4.5cm, width=4.2cm]{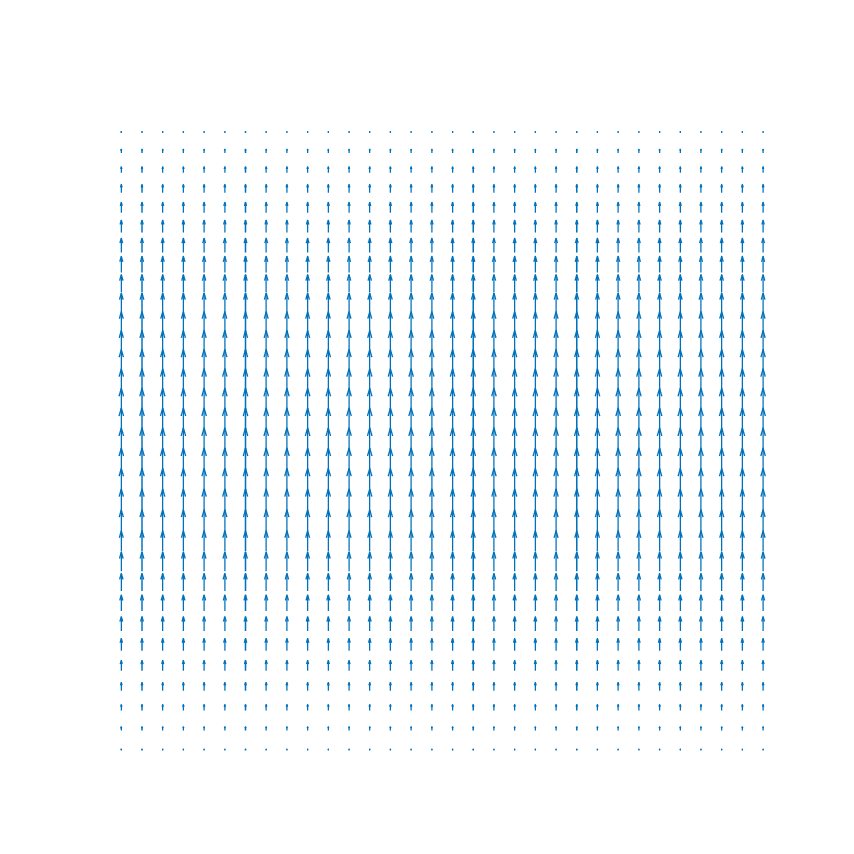}
			\centering\includegraphics[height=4.5cm, width=4.2cm]{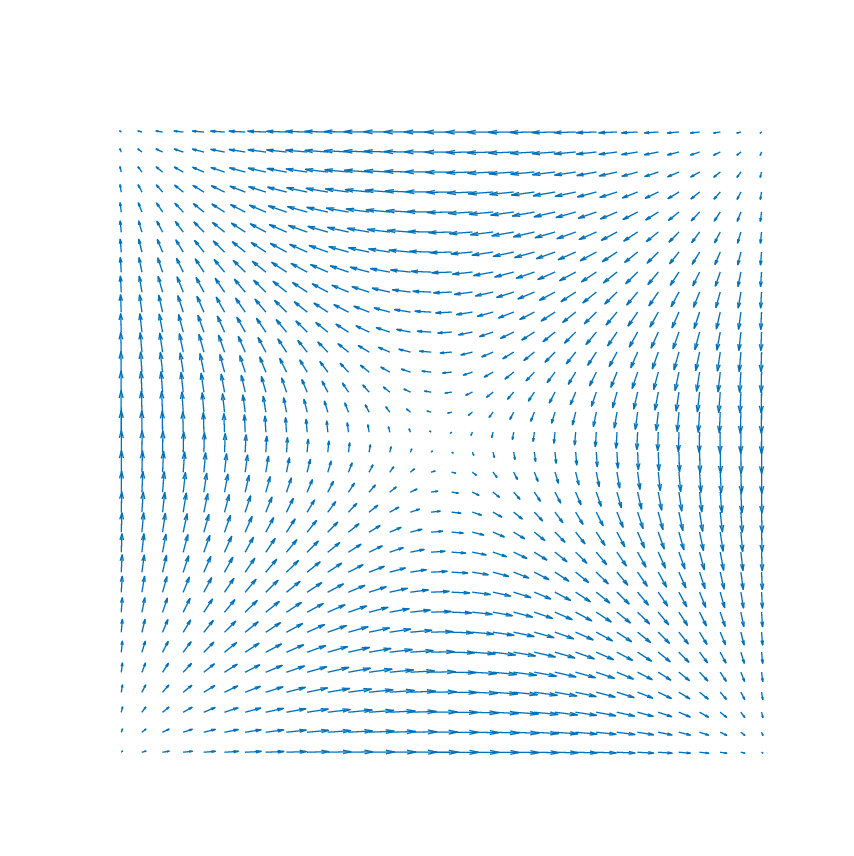}
			\centering\includegraphics[height=4.5cm, width=4.2cm]{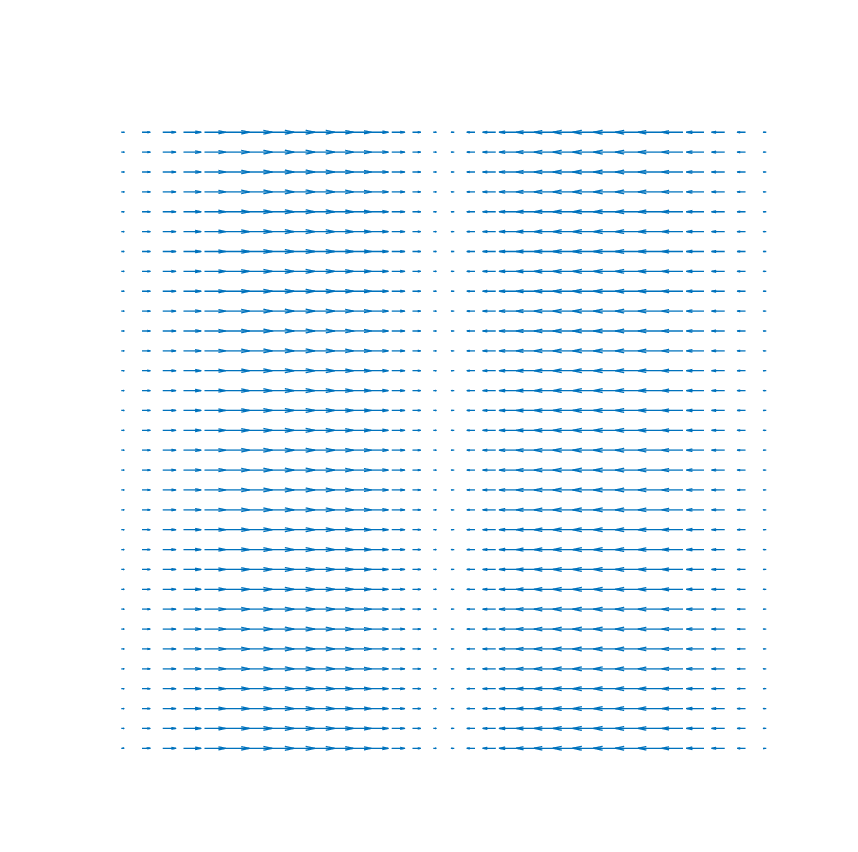}
		\caption{Plots of first, third and fifth eigenfunctions on the rectangular acoustic cavity and the corresponding displacements obtained for this test. Top row: $p_{h}^{1}$, $p_{h}^{3}$ and $p_{h}^{5}$; bottom row: corresponding displacement fields $\bu_{h}^{1}$, $\bu_{h}^{3}$ and $\bu_{h}^{5}$.}
		\label{fig:eigen1}
	\end{center}
\end{figure}

\subsection{Test 2: L-shaped domain}
Now we will consider the non-convex domain $\Omega:=(-1,1)\times(-1,1) \setminus [0,1)\times [0,1)$, which is a L-shaped domain. The boundary condition for this domain is $\nabla p\cdot\boldsymbol{n}=0$. Clearly due to the geometrical singularity of this geometrical configuration, some of the eigenfunctions of problem 
\eqref{def:acustica_pressure} result to be non sufficiently smooth and hence, a loss on the convergence order of the numerical method arises. Since for this geometry we do not have an exact solution, all our results will be compared with the extrapolated  frequencies computed by \eqref{eq:least_square}.

A sample of the  meshes that we consider for this test are reported in Figure \ref{fig:meshesxx}. Let us remark that these meshes have been obtained 
with the procedure described in \textbf{Step 1} and \textbf{Step 2}. Finally, let us mention that for this test we take physical parameters of acoustic fluids, more precisely, the ones associated to water and air. These parameter are:
\begin{itemize}
\item For water, the density is $\rho= 1000\,\text{kg}/\,\text{m}^3 $ and the sound speed $c=1430\, \text{m}/\text{s}$;
	\item For the air, the density is $\rho=1\,\text{kg}/\,\text{m}^3  $ and the sound speed $c=340\, \text{m}/\text{s}$.
\end{itemize}
 
\begin{figure}[H]
	\begin{center}
			\centering\includegraphics[height=5cm, width=5cm]{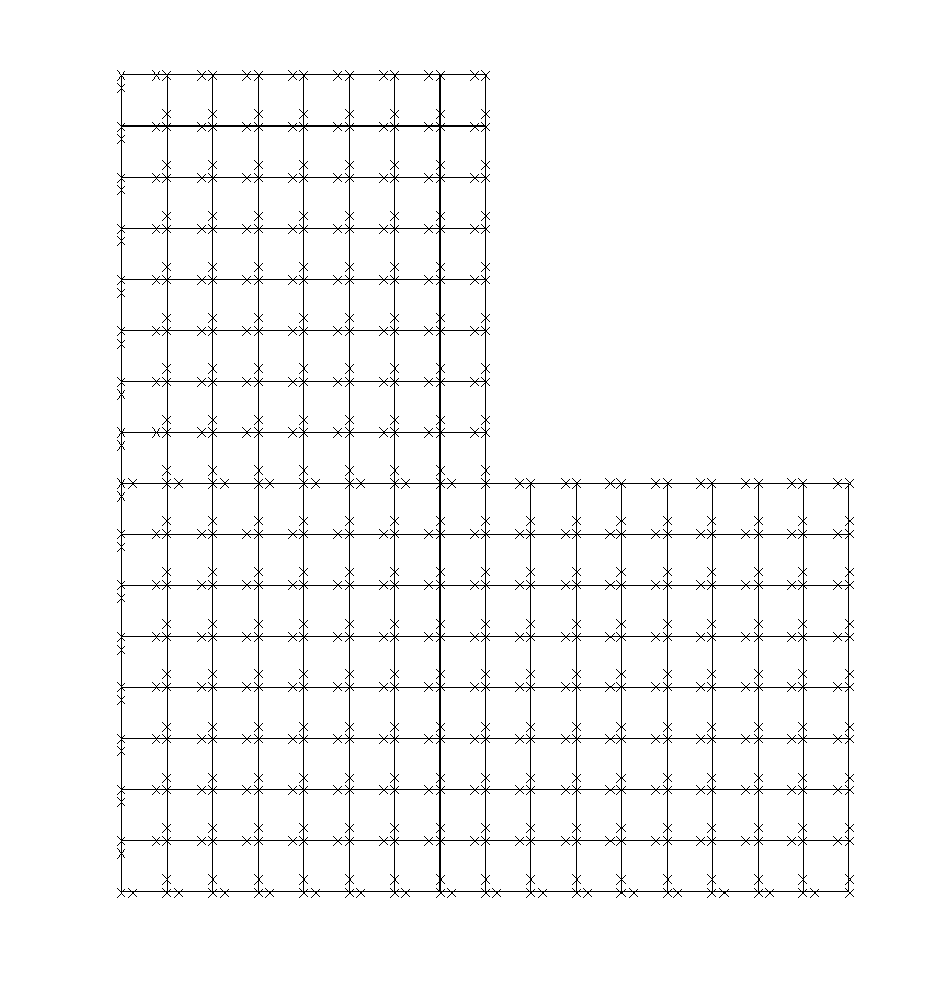}
			\centering\includegraphics[height=5cm, width=5cm]{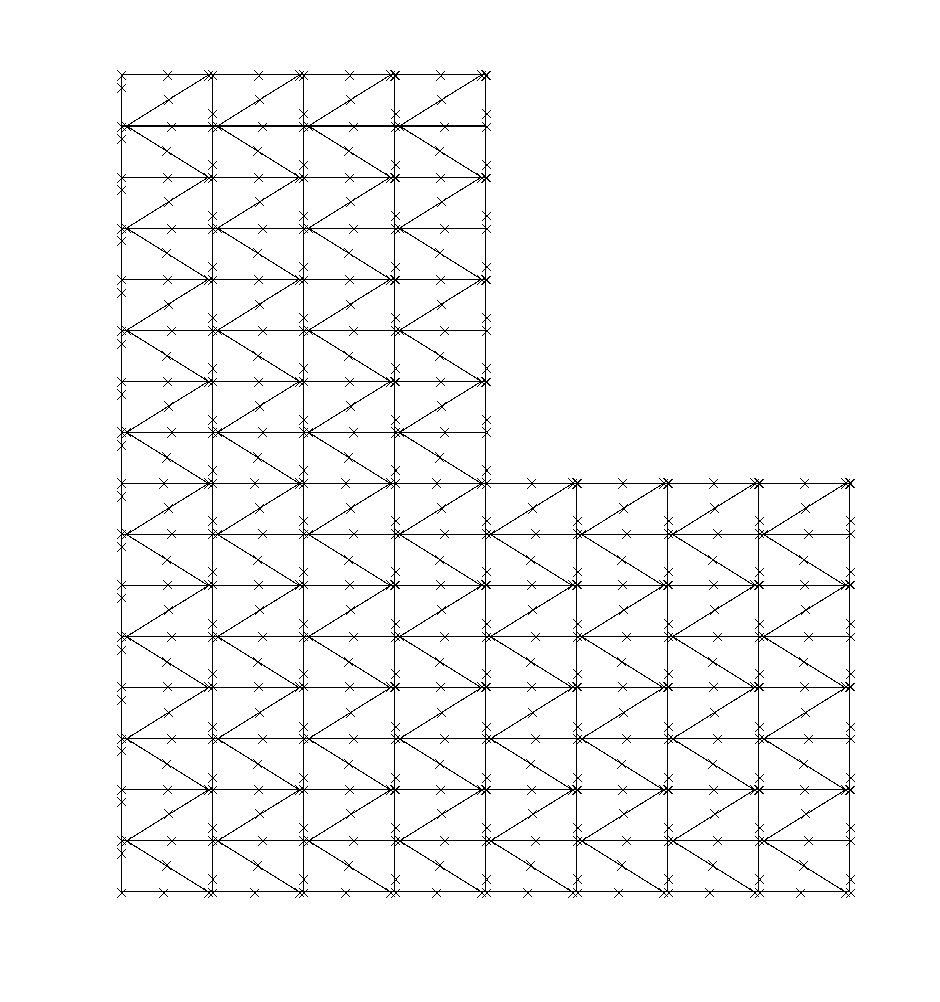}\\
		\caption{Sample of the meshes for the L-shaped domain for mesh refinement $\mathrm{N} = 16$. Left $\mathcal{T}_{h}^{4}$; right $\mathcal{T}_{h}^{5}$}
		\label{fig:meshesxx}
	\end{center}
\end{figure}
For this test, we only are concentrated  on the behavior of our method when the theoretical stabilization term defined in \eqref{stabspec} is considered, whereas the stabilization parameter $\sigma_E$ has been chosen by $\sigma_{E} := \text{tr}(a^{E}(\boldsymbol{\Pi}_{k,E},\boldsymbol{\Pi}_{k,E}))/2$. In Tables \ref{tabla7}, \ref{tabla8}, \ref{tabla9} and \ref{tabla10} we report approximated values of each one of the frequencies $\omega_{i} = \sqrt{\lambda_{i}-1}$, $i = 1,\ldots,5$, their respective orders of convergence and extrapolated frequencies for different meshes presented in Figure \ref{fig:meshesxx}. Let us recall  that the extrapolated values have been obtained with the aid of \eqref{eq:least_square} and are reported on the last column of the forthcoming tables. 

In Tables \ref{tabla7} and \ref{tabla8} we compare the results obtained for mesh $\CT_h^{4}$ when we consider the density and sound speed of water, whereas in Tables \ref{tabla9} and \ref{tabla10} we compare the results obtained for mesh $\CT_h^5$ when we consider the density and sound speed of the air. 
As in the previous test, in the row "Ratio" we report the measure of the ratios in each refinement, in order to present the presence of real small edges on the meshes.
\begin{table}[H]
\caption{Five lowest approximated frequencies, orders of convergence, extrapolated frequencies and ratios, computed with $\mathcal{T}_{h}^{4}$ and the stabilization term defined in \eqref{stabspec} for $\rho= 1000\,\text{kg}/\,\text{m}^3 $ and $c=1430\, \text{m}/\text{s}$.}
\label{tabla7}
\begin{center}
\resizebox{13cm}{!}{
\begin{tabular}{|c|c|c|c|c|c|c|c|} \hline
$\omega_{hi}$ & N = 16 & N = 32 & N = 64 & N = 128 & Order & Extrap. \\ \hline 
 $\omega_{h1}$ & 218.798 & 215.779 & 214.585 & 214.112 & 1.34 & 213.806 \\
$\omega_{h2}$ & 515.993 & 513.062 & 512.303 & 512.109 & 1.95 & 512.039 \\ 
$\omega_{h3}$ & 1448.270 & 1434.572 & 1431.148 & 1430.287 & 2.00 & 1430.004 \\
$\omega_{h4}$ & 1448.357 & 1434.590 & 1431.148 & 1430.287 & 2.00 & 1430.001 \\
$\omega_{h5}$ & 1697.359 & 1662.003 & 1653.164 & 1650.952 & 2.00 & 1650.217 \\
\hline
Ratio & 7.8740e-03 & 1.9569e-03 &  4.8852e-04  & 1.2209e-04 \\ \cline{1-5}
\end{tabular}}
\end{center}
\end{table}
\begin{table}[H]
\caption{Five lowest approximated frequencies, orders of convergence, extrapolated frequencies and ratios, computed with $\mathcal{T}_{h}^{5}$ and the stabilization term defined in \eqref{stabspec} for $\rho= 1000\,\text{kg}/\,\text{m}^3 $ and $c=1430\, \text{m}/\text{s}$.}
\label{tabla8}
\begin{center}
\resizebox{13cm}{!}{
\begin{tabular}{|c|c|c|c|c|c|c|c|} \hline
$\omega_{hi}$ & N = 16 & N = 32 & N = 64 & N = 128 & Order & Extrap. \\ \hline 
 $\omega_{h1}$ & 220.093 & 216.184 & 214.729 & 214.162 & 1.41 & 213.833 \\
$\omega_{h2}$ & 520.743 & 513.702 & 512.461 & 512.149 & 2.44 & 512.126 \\
$\omega_{h3}$ & 1456.979 & 1435.837 & 1431.136 & 1430.285 & 2.21 & 1429.959 \\
$\omega_{h4}$ & 1497.770 & 1448.411 & 1434.398 & 1431.103 & 1.86 & 1429.493 \\
$\omega_{h5}$ & 1715.697 & 1666.822 & 16547.520 & 1651.357 & 1.99 & 1650.444 \\
\hline
Ratio & 1.9124e-05 &  4.7522e-06 &  1.1862e-06  & 2.9645e-07 \\ \cline{1-5}
\end{tabular}}
\end{center}
\end{table}
\begin{table}[H]
\caption{Five lowest approximated frequencies, orders of convergence, extrapolated frequencies and ratios, computed with $\mathcal{T}_{h}^{4}$ and the stabilization term defined in \eqref{stabspec} for $\rho= 1\,\text{kg}/\,\text{m}^3 $ and $c=340\, \text{m}/\text{s}$.}
\label{tabla9}
\begin{center}
\resizebox{13cm}{!}{
\begin{tabular}{|c|c|c|c|c|c|c|c|} \hline
$\omega_{hi}$ & N = 16 & N = 32 & N = 64 & N = 128 & Order & Extrap. \\ \hline 
 $\omega_{h1}$ & 52.019 & 51.312 & 51.020 & 50.908 & 1.30 & 50.826 \\
$\omega_{h2}$ & 122.678 & 121.977 & 121.806 & 121.760 & 2.02 & 121.748 \\
$\omega_{h3}$ & 344.347 & 340.819 & 340.273 & 340.068 & 2.51 & 340.087 \\
$\omega_{h4}$ & 344.368 & 341.154 & 340.273 & 340.068 & 1.91 & 339.977 \\
$\omega_{h5}$ & 403.487 & 395.110 & 393.060 & 392.534 & 2.02 & 392.374 \\
\hline
Ratio & 7.8740e-03 & 1.9569e-03 &  4.8852e-04  & 1.2209e-04 \\ \cline{1-5}
\end{tabular}}
\end{center}
\end{table}

\begin{table}[H]
\caption{Five lowest approximated frequencies, orders of convergence, extrapolated frequencies and ratios, computed with $\mathcal{T}_{h}^{5}$ and the stabilization term defined in \eqref{stabspec} for $\rho= 1\,\text{kg}/\,\text{m}^3 $ and $c=340\, \text{m}/\text{s}$.}
\label{tabla10}
\begin{center}
\resizebox{13cm}{!}{
\begin{tabular}{|c|c|c|c|c|c|c|c|} \hline
$\omega_{hi}$ & N = 16 & N = 32 & N = 64 & N = 128 & Order & Extrap. \\ \hline 
 $\omega_{h1}$ & 52.347 & 51.407 & 51.054 & 50.920 & 1.41 & 50.840 \\
$\omega_{h2}$ & 123.234 & 122.125 & 121.844 & 121.770 & 1.97 & 121.746 \\
$\omega_{h3}$ & 344.188 & 341.004 & 340.270 & 340.068 & 2.08 & 340.023 \\
$\omega_{h4}$ & 356.418 & 344.117 & 341.046 & 340.262 & 2.00 & 340.014 \\
$\omega_{h5}$ & 408.457 & 396.474 & 393.438 & 392.630 & 1.97 & 392.374  \\
\hline
Ratio & 1.9124e-05 &  4.7522e-06 &  1.1862e-06  & 2.9645e-07 \\ \cline{1-5}
\end{tabular}}
\end{center}
\end{table}

Observe that there is no significant difference with the results obtained for meshes $\CT_h^4$ and $\CT_h^5$ for the density and sound speed of the water. The frequencies are well captured and the orders of convergence are the expected according to the geometry of the domain. The same occurs for the physical parameters of the air. Finally, we present in Figure \ref{fig:eigen2} the plots of the first, second and fifth eigenfunctions obtained in this test for the physical parameters of the water.

\begin{figure}[H]
	\begin{center}
			\centering\includegraphics[height=4.5cm, width=4.2cm]{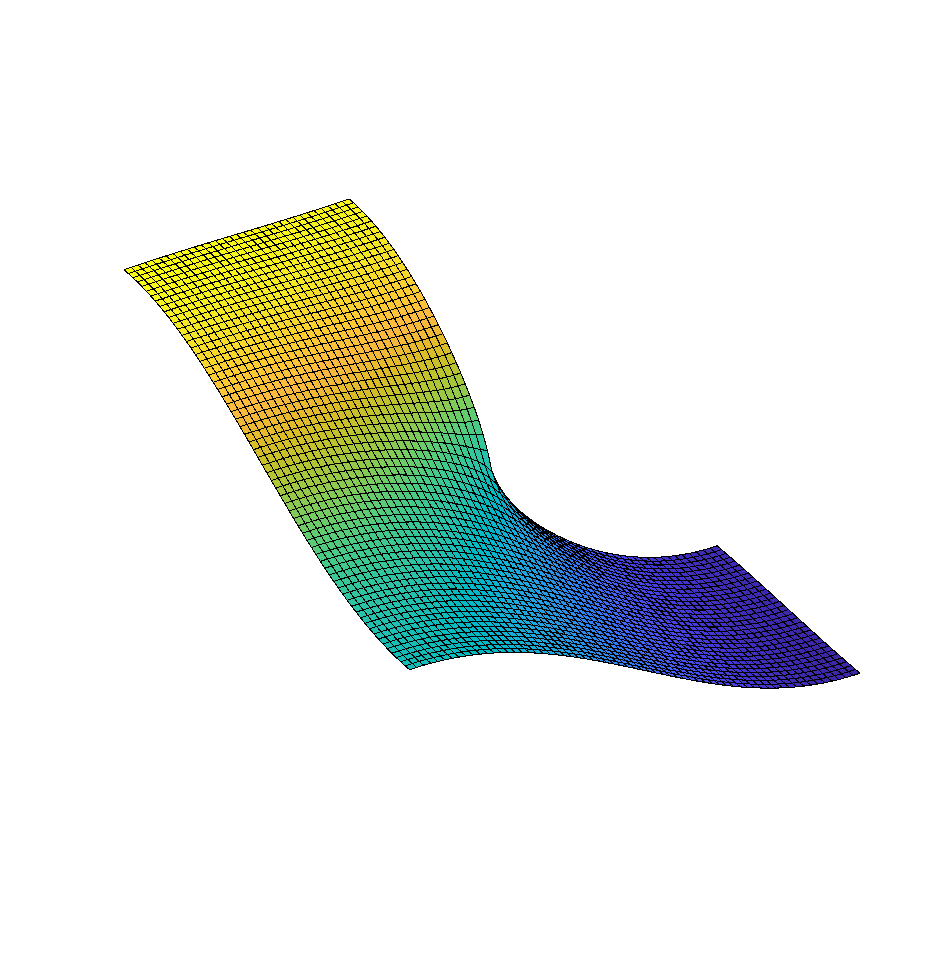}
			\centering\includegraphics[height=4.5cm, width=4.2cm]{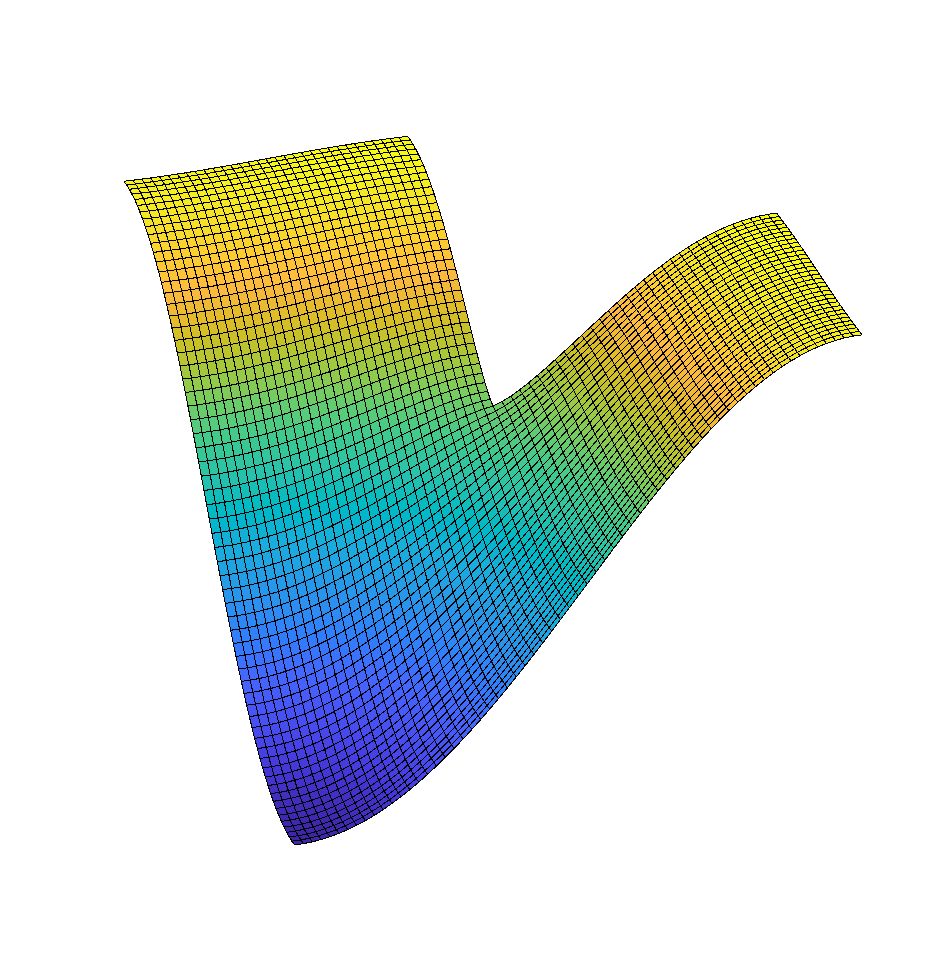}
			\centering\includegraphics[height=4.5cm, width=4.2cm]{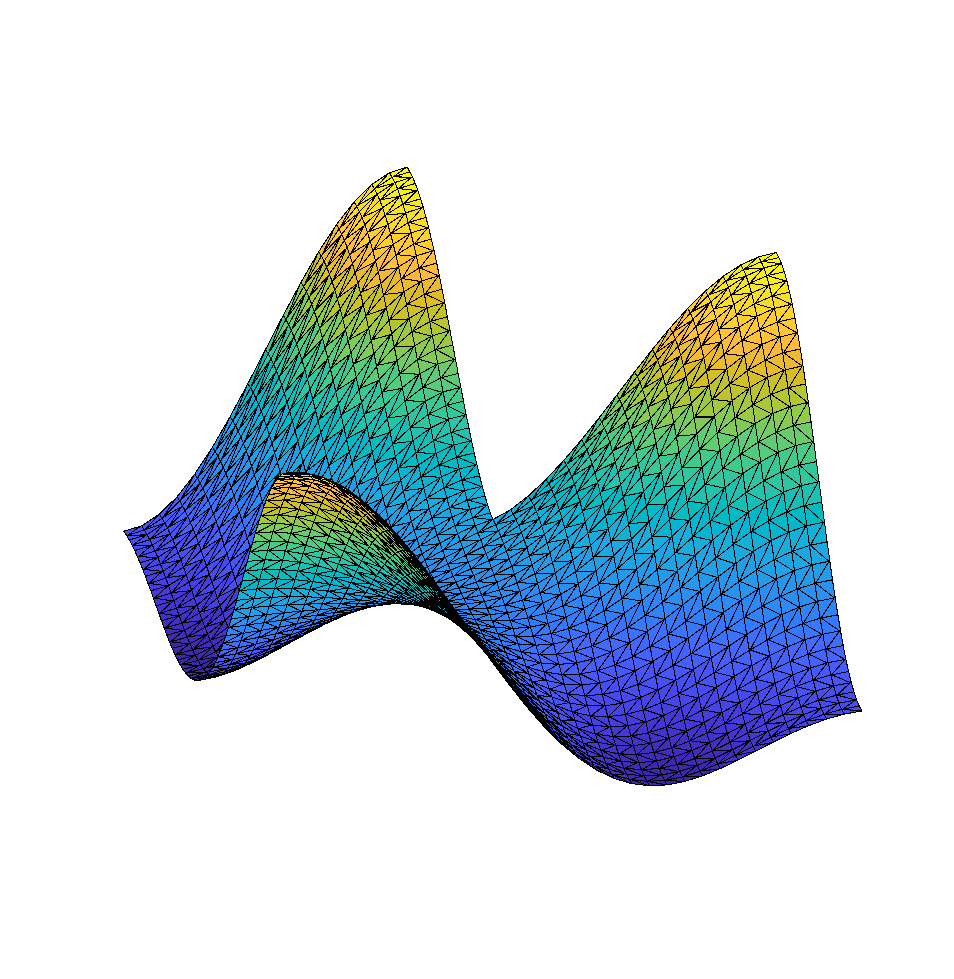} \\
					    \centering\includegraphics[height=4.5cm, width=4.2cm]{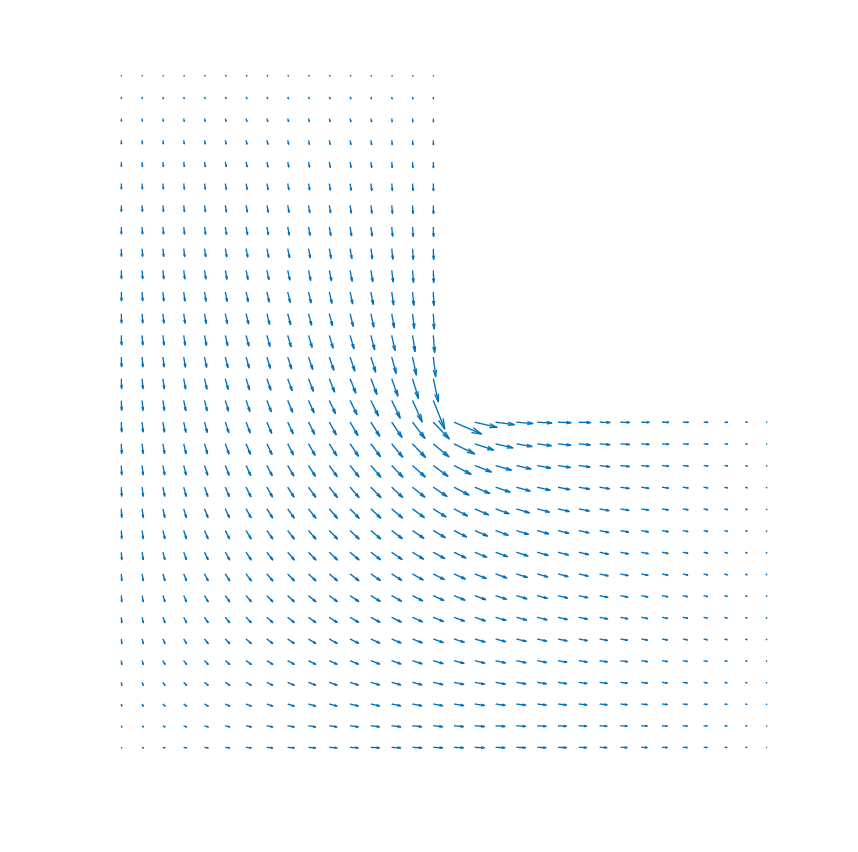}
			\centering\includegraphics[height=4.5cm, width=4.2cm]{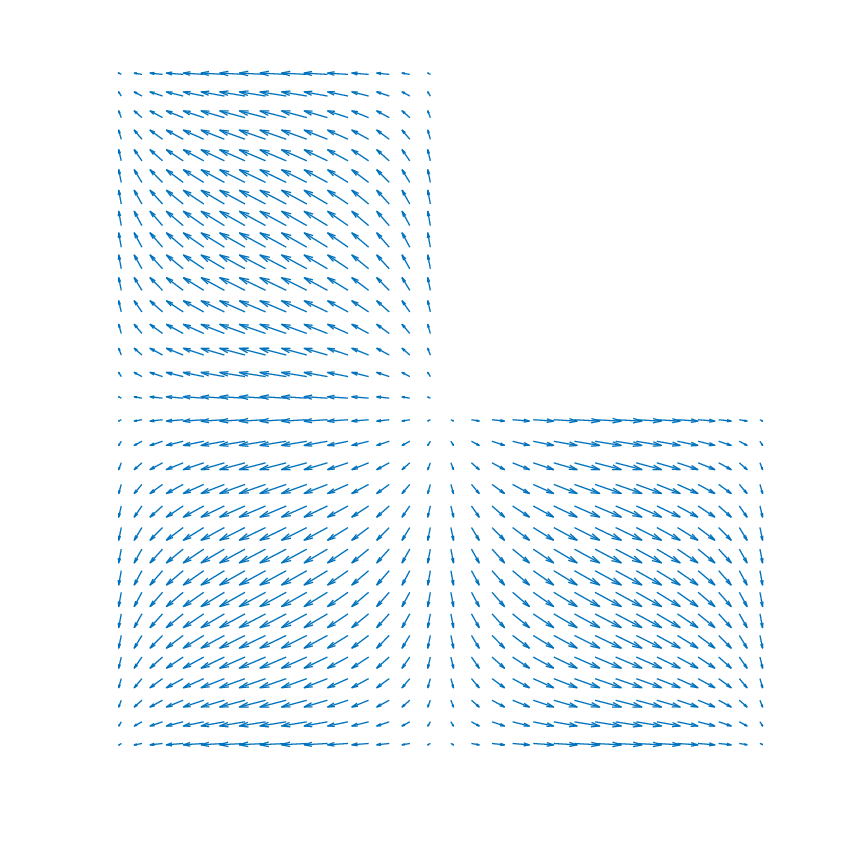}
			\centering\includegraphics[height=4.5cm, width=4.2cm]{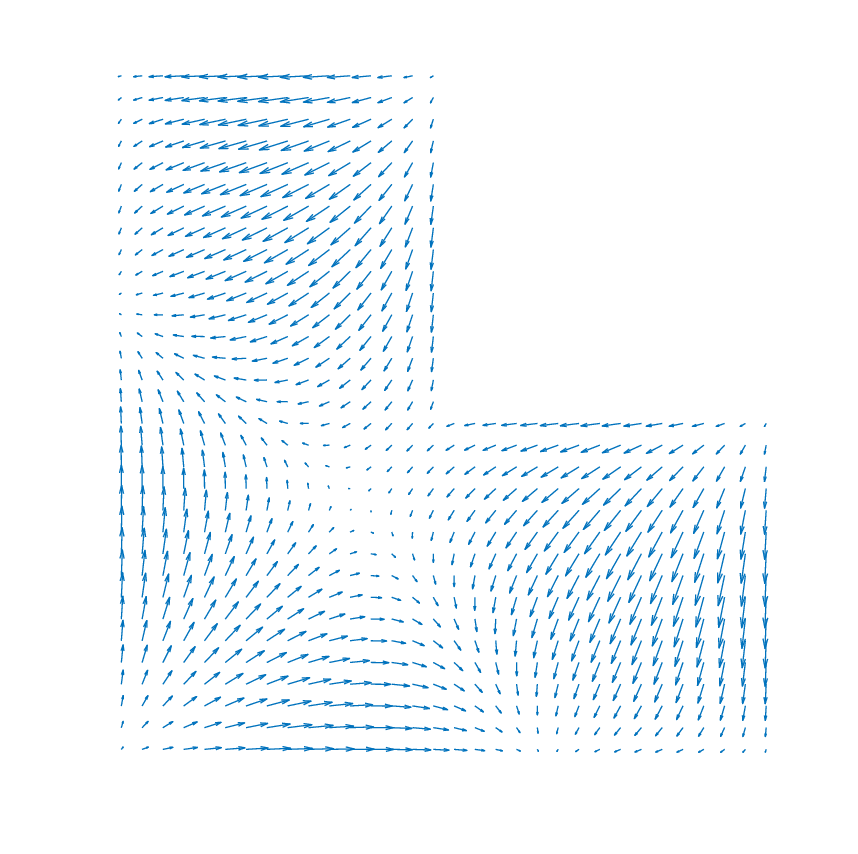}
		\caption{Plots of the first, third and fifth eigenfunctions on the L-shaped domain for the acoustic problem considering the physical parameters of  water.
}
		\label{fig:eigen2}
	\end{center}
\end{figure}

\subsection{Test 3: Effects on the stability constants}

The aim of this test is to analyze the influence of the stability constant $\sigma_E$ on the computed spectrum, in order to investigate the effects of this parameter when the spectrum is computed since for other VEMs applied to eigenvalue problems, this parameter may introduce spurious frequencies when it is not correctly determined (see \cite{danilo_eigen, MR4284360, MR4253143, MR4229296} where this phenomenon  is well documented). 

For this test, we have consider the domain $\O=(0,a)\times (0,b)$ presented in Test 1, for $a=1$ and $b=1.1$ and  the mesh $\CT_h^{2}$ presented in Figure \ref{fig:meshesx}. We observe that in this case, there are  not presence of spurious frequencies  for any choice of the stabilization parameter, which implies that the spectrum is correctly captured regardless of the stabilization parameter. So, taking this in consideration, we present in Table \ref{tabla11} the lowest three approximated frequencies, orders of convergence and extrapolated frequencies obtained for different stabilization parameters. Also, the ratios obtained for this test have been presented in the row "Ratio"  at the bottom of the table.  For the other meshes the results also hold, in the sense that no spurious frequencies appear on the computed spectrum.

The ratios reported in Table \ref{tabla11} clearly confirm the presence of small edges on the computation of the orders of convergence. Let us remark that for other type of meshes the results are similar. Let us emphasizes that the results on Table \ref{tabla11} have been obtained when the density and the sound speed are equal to one. Therefore, if we change the 
parameters to real ones as water, air, oil, etc., the behavior may be different since the configuration of the problem changes and hence,  the parameter $\sigma_E$ must be chosen in order to scale correctly like the bilinear forms considered.

\begin{table}[H]
\caption{\label{tabla11} Lowest three approximated frequencies and orders of convergence for $\mathcal{T}_{h}^{1}$ and $\rho = 1 = c$ for $4^{-2} \leq \sigma_E \leq 4^{2}$.}
\begin{center}
\begin{tabular}{|c|c|c|c|c|c|c|c|} \hline
$\sigma_E$ & $\omega_{hi}$ & $\mathrm{N} = 11$ & $\mathrm{N} = 20$ & $\mathrm{N} = 39$ & $\mathrm{N} = 88$ & Order & Extrap. \\ \hline \hline
\multirow{3}{*}{$4^{-2}$} & $\omega_{h1}$ & 0.83655 & 0.82905 & 0.82711 & 0.82661 & 1.95 & 0.82643 \\
& $\omega_{h2}$ & 1.01193 & 1.00309 & 1.00080 & 1.00020 & 1.95 & 1.00000 \\
& $\omega_{h3}$ & 1.83411 & 1.82818 & 1.82699 & 1.82658 & 2.20 & 1.82656 \\ \hline \hline
\multirow{3}{*}{$4^{-1}$} & $\omega_{h1}$ & 0.83690 & 0.82909 & 0.82711 & 0.82661 & 1.98 & 0.82644 \\
& $\omega_{h2}$ & 1.01261 & 1.00320 & 1.00080 & 1.00020 & 1.98 & 1.00000 \\
& $\omega_{h3}$ & 1.86156 & 1.83522 & 1.82864 & 1.82699 & 2.00 & 1.82644 \\ \hline \hline
\multirow{3}{*}{$4^{0}$} & $\omega_{h1}$ & 0.83705 & 0.82910 & 0.82711 & 0.82661 & 2.00 & 0.82645 \\
& $\omega_{h2}$ & 1.01282 & 1.00321 & 1.00080 & 1.00020 & 2.00 & 1.00000 \\
& $\omega_{h3}$ & 1.96986 & 1.86172 & 1.83523 & 1.82864 & 2.03 & 1.82660 \\ \hline \hline
\multirow{3}{*}{$4$} & $\omega_{h1}$ & 0.83691 & 0.82910 & 0.82711 & 0.82660 & 1.97 & 0.82643 \\
& $\omega_{h2}$ & 1.01288 & 1.00321 & 1.00080 & 1.00017 & 1.99 & 0.99997 \\
& $\omega_{h3}$ & 2.40179 & 1.96768 & 1.86159 & 1.83520 & 2.03 & 1.82693 \\ \hline \hline
\multirow{3}{*}{$4^{2}$} & $\omega_{h1}$ & 0.83711 & 0.82910 & 0.82711 & 0.82661 & 2.01 & 0.82645 \\
& $\omega_{h2}$ & 1.01290 & 1.00322 & 1.00080 & 1.00020 & 2.00 & 1.00000 \\
& $\omega_{h3}$ & 3.47880 & 2.39146 & 1.96702 & 1.86155 & 1.49 & 1.77214 \\ \hline
\multicolumn{2}{|c|}{Ratio} & 3.5665e-03  & 8.8874e-04  & 2.2201e-04 &  5.5490e-05  \\ \cline{1-6}
\end{tabular}
\end{center}
\end{table}

\bibliographystyle{siamplain}
\bibliography{references}
\end{document}